\documentclass{article}
\usepackage{amsmath,amssymb,amsthm,mathtools,enumitem,geometry,esint,color,hyperref,cleveref}

\setlist[enumerate,1]{label=(\roman*)}

\theoremstyle{plain}
\newtheorem{theo}{Theorem}[section]

\theoremstyle{definition}
\newtheorem{defi}[theo]{Definition}
\newtheorem*{defi*}{Definition}
\newtheorem*{exa*}{Example}

\newtheorem{lem}[theo]{Lemma}
\newtheorem*{lem*}{Lemma}
\newtheorem{cor}[theo]{Corollary}
\newtheorem{pro}[theo]{Proposition}
\newtheorem*{pro*}{Proposition}

\newtheorem*{cla*}{Claim}

\newtheorem*{lie*}{Lie}

\newtheorem*{alg*}{Algorithm}

\theoremstyle{remark}
\newtheorem{rem}[theo]{Remark}
\newtheorem*{rem*}{Remark}

%\newcommand\setfont[1]{\setmainfont{#1}\setmathfont{#1 Math}}
%\setfont{XITS}
%\setfont{STIX}
%\setmainfont{DejaVu Sans}\setmathfont{Tex Gyre DejaVu Math}
%\setmainfont{DejaVu Serif}\setmathfont{Tex Gyre DejaVu Math}
%\setfont{Asana}
%\setfont{Tex Gyre Schola}
%\setfont{Tex Gyre Termes}
%\setfont{Tex Gyre Pagella}

\allowdisplaybreaks

\crefname{cor}{Corollary}{Corollaries}
\Crefname{cor}{Corollary}{Corollaries}
\crefname{pro}{Proposition}{Propositions}
\Crefname{pro}{Proposition}{Propositions}
\crefname{lem}{Lemma}{Lemmas}
\Crefname{lem}{Lemma}{Lemmas}
\crefname{theo}{Theorem}{Theorems}
\Crefname{theo}{Theorem}{Theorems}
\crefname{rem}{Remark}{Remarks}
\Crefname{rem}{Remark}{Remarks}
\crefname{exa}{Example}{Examples}
\Crefname{exa}{Example}{Examples}
\crefname{defi}{Definition}{Definitions}
\Crefname{defi}{Definition}{Definitions}
\crefname{section}{Section}{Sections}
\Crefname{section}{Section}{Sections}
\crefname{subsection}{Subsection}{Subsections}
\Crefname{subsection}{Subsection}{Subsections}
\crefname{equation}{}{}
\Crefname{equation}{Formula}{Formulas}
\crefname{enumi}{}{}
\Crefname{enumi}{Item}{Items}
\crefname{figure}{Figure}{Figures}
\Crefname{figure}{Figure}{Figures}

\numberwithin{equation}{section}

\renewcommand\tilde\widetilde
\renewcommand\hat\widehat
\newcommand\intd{\mathop{}\!\mathrm{d}}
\newcommand\df\emph
\newcommand\ind[1]{1_{#1}}
\newcommand\tx\text
\newcommand\f\frac
\newcommand\seq{:=}

\newcommand\loc{\mathrm{loc}}
\renewcommand\S{\mathcal S}
\newcommand\Q{\mathcal Q}
\renewcommand\P{\mathcal P}
\newcommand\B{\mathcal B}
\newcommand\C{\mathcal C}
\newcommand\F{\mathcal F}
\newcommand\R{\mathcal R}
\newcommand\T{\mathcal T}
\renewcommand\H{\mathcal H}
\DeclareMathOperator\var{var}
\DeclareMathOperator\diam{diam}
\DeclareMathOperator\dist{dist}
\DeclareMathOperator\conv{conv}
\let\div\relax
\DeclareMathOperator\div{div}
\newcommand\M{{\mathrm M}}
\newcommand\Mj{\overline{\mathrm M}}

\newcommand\Mu{\widetilde{\mathrm M}}

\newcommand\BV{\mathrm{BV}}

\newcommand\Sph{\mathbb S}
\newcommand\tb[1]{\mathop{\partial{}}{#1}}
\newcommand\mb[1]{\mathop{\partial_*}{#1}}
\newcommand\mbe[1]{\mb{(#1)}}
\newcommand\mbb[1]{\mb{\Bigl(#1\Bigr)}}
\newcommand\tbe[1]{\tb{(#1)}}
\newcommand\tbb[1]{\tb{\Bigl(#1\Bigr)}}

\newcommand\tc[1]{\overline{#1}}
\newcommand\mc[1]{\tc{#1}^*}
\newcommand\ti[1]{\mathring{#1}}

\newcommand\mi[1]{\ti{#1}^*}

\newcommand\sm[1]{\mathcal{H}^{d-1}(#1)}
\newcommand\smb[1]{\mathcal{H}^{d-1}\Bigl(#1\Bigr)}
\newcommand\lm[1]{\mathcal{L}(#1)}
\newcommand\lmb[1]{\mathcal{L}\Bigl(#1\Bigr)}
\newcommand\rad[1]{r(#1)}
\newcommand\ms[1]{\mathrm m(#1)}
\newcommand\msb[1]{\mathrm m\Bigl(#1\Bigr)}
\newcommand\ml[2]{\mathrm m(#1,#2)}

\newcommand\bll[1]{B(#1)}
\newcommand\cone[3]{\bcone{B(#1,#3)}{#2}}
\newcommand\bcone[2]{\conv(#1\cup\{#2\})}
\newcommand\bconeb[2]{\conv\Bigl(#1\cup\{#2\}\Bigr)}

\makeatletter
\newcommand{\oset}[3][0ex]{%
  \mathrel{\mathop{#3}\limits^{
    \vbox to#1{\kern-2\ex@
    \hbox{$\scriptstyle#2$}\vss}}}}
\makeatother

\newcommand\msubset{\oset[-.1ex]*\subset}

\newcommand\meq{\oset*=}
\newcommand\msubsetneq{\oset[-.2ex]*\subsetneq}

\begin{document}

\title{The Variation of the Uncentered Maximal Operator with respect to Cubes}
\author{Julian Weigt\footnote{
Aalto University,
Department of Mathematics and Systems Analysis,
P.O.\ Box 11100,
FI-00076 Aalto University,
Espoo,
Finland,
\texttt{julian.weigt@aalto.fi}
}
}

\maketitle

\begingroup
\begin{NoHyper}%
\renewcommand\thefootnote{}\footnotetext{%
2020 \textit{Mathematics Subject Classification.} 42B25,26B30.\\%
\textit{Key words and phrases.} maximal function, variation.%
}%
\addtocounter{footnote}{-1}%
\end{NoHyper}%
\endgroup

\begin{abstract}
We consider the maximal operator with respect to uncentered cubes
on Euclidean space with arbitrary dimension.
We prove that for any function with bounded variation,
the variation of its maximal function is bounded by the variation of the function
times a dimensional constant.

We also prove the corresponding result for maximal operators with respect to collections of more general sets than cubes.
The sets are required to satisfy a certain inner cone star condition and in addition the collection must enjoy a tiling property which for example the collection of all cubes does enjoy and the collection of all Euclidean balls does not.
\end{abstract}

\section{Introduction}
\label{sec_introduction}

For a locally integrable real-valued function \(f\in L^1_\loc(\mathbb{R}^d)\), with \(d\in\mathbb{N}\),
we consider the uncentered maximal function over cubes, defined by
\[
\M f(x)
=
\sup_{Q\ni x}
\f1{\lm Q}\int_Q
f(y)\intd y
,
\]
where the supremum is taken over all open axes-parallel cubes \(Q\) which contain \(x\in\mathbb{R}^d\).
We discuss maximal operators with respect to more general sets in \cref{sec_results}.
Usually, the maximal operator integrates over \(|f|\) instead of \(f\) because the maximal function is classically used for \(L^p\) estimates such as the Hardy-Littlewood maximal function theorem which states that for every \(p>1\) and \(f\in L^p(\mathbb{R}^d)\) we have
\begin{equation}
\label{eq_hlmft}
\lVert\M f\rVert_{L^p(\mathbb{R}^d)}
\leq C_{d,p}
\lVert f\rVert_{L^p(\mathbb{R}^d)}
.
\end{equation}
In this manuscript we discuss the regularity of maximal functions for which also sign-changing functions matter.
The first regularity result is due to Kinnunen who proved in \cite{MR1469106} that for \(p>1\) and \(f\in W^{1,p}(\mathbb{R}^d)\) the bound
\begin{equation}
\label{eq_goalp}
\lVert\nabla\M f\rVert_{L^p(\mathbb{R}^d)}
\leq C_{d,p}
\lVert\nabla f\rVert_{L^p(\mathbb{R}^d)}
\end{equation}
holds, from which it follows that the maximal operator is bounded on \(W^{1,p}(\mathbb{R}^d)\). 
Originally, \cref{eq_hlmft,eq_goalp} have been proven only for the Hardy-Littlewood maximal operator which averages over all balls centered in \(x\),
but both arguments work for a variety of maximal operators, including the operator \(\M\) defined above.
The proof strategy for \cref{eq_goalp} relies on \cref{eq_hlmft} and thus also fails for \(p=1\).
The question of whether \cref{eq_goalp} nevertheless holds with \(p=1\)
for any maximal operator has become a well known problem
and has been subject to considerable research.
However, it has so far remained mostly unanswered, except in one dimension.
Our main result is the following:
\begin{theo}
\label{theo_goal}
Let \(f\in L^1_\loc(\mathbb{R}^d)\) with \(\var f<\infty\).
Then \(\M f\in L_\loc^{\f d{d-1}}(\mathbb{R}^d)\) and
\begin{equation}
\label{eq_goal}
\var\M f
\leq C_d
\var f
\end{equation}
where the constant \(C_d\) depends only on the dimension \(d\).
\end{theo}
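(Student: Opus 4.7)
The plan is to prove \cref{theo_goal} via the coarea formula. Since $\M$ is uncentered, $\M f$ is lower semicontinuous, so the superlevel sets $E_t \seq \{\M f > t\}$ are open. The coarea formula for BV functions then reduces the problem to
\[
\var\M f = \int_{-\infty}^\infty \H^{d-1}(\partial^* E_t) \intd t,
\]
so it suffices to bound $\H^{d-1}(\partial^* E_t)$ for every $t$ and integrate. By definition of $\M$, every point of $E_t$ lies in some open cube $Q$ with $\avint_Q f > t$, so $E_t$ is the union of all such witnessing cubes.

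For each fixed $t$ I would select a subfamily $\{Q_i\}$ of witnessing cubes that covers $E_t$ and has bounded overlap. This is achieved by combining a Vitali- or Besicovitch-type extraction with a stopping criterion: each $Q_i$ is chosen so that $\avint_{Q_i} f$ is only slightly above $t$, and any sufficient shrinkage of $Q_i$ drops the average below $t$. This makes $Q_i$ ``minimal'' among the witnesses at scale $\ell(Q_i)$. By subadditivity of the perimeter one then has
\[
\H^{d-1}(\partial^* E_t) \leq \sum_i \H^{d-1}(\partial Q_i).
\]

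The heart of the proof is to relate $\sum_i \H^{d-1}(\partial Q_i)$ to $\var f$. The stopping criterion forces $\{f>s\}\cap Q_i$ to occupy a nontrivial, non-full fraction of $Q_i$ for some $s$ comparable to $t$; by the relative isoperimetric inequality in a cube this yields
\[
\H^{d-1}(\partial Q_i) \leq C_d\,\H^{d-1}\bigl(\partial^* \{f > s_i\} \cap C_d Q_i\bigr)
\]
for an appropriately chosen $s_i\in(c_d t,C_d t)$. Summing over $i$ using bounded overlap, then integrating in $t$, Fubini together with the coarea formula for $f$ delivers $\var \M f \leq C_d \var f$. The $L_\loc^{d/(d-1)}$ conclusion then follows from $\M f \in \BV_\loc$ via the Gagliardo--Nirenberg--Sobolev embedding.

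The main obstacle I expect is the genuinely multi-scale nature of the selected covering: for a given $t$ the $Q_i$ may occur at wildly different sizes, and as $t$ varies the coverings change nontrivially. Constructing the assignment $Q_i \mapsto s_i$ so that boundary contributions of $\{f > s\}$ are counted with bounded multiplicity, uniformly across scales and levels, is the crux of the argument and presumably accounts for the main technical innovation of the paper. The sign-changing case should reduce to the nonnegative one by treating positive and negative level sets separately.
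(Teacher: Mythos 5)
Your outline is fine for the part of \(\partial\{\M f>t\}\) contributed by cubes in which \(\{f\geq t\}\) fills a definite fraction of the cube (that is essentially \cref{pro_levelsets_finite_g,cor_highdensitycubes}), but the charging scheme you propose for the remaining cubes does not work, and those are precisely the hard ones. You claim the stopping criterion forces \(\{f>s\}\cap Q_i\) to have nontrivial, non-full density for some \(s\) comparable to \(t\), so that the relative isoperimetric inequality gives \(\sm{\partial Q_i}\leq C_d\,\sm{\mb{\{f>s_i\}}\cap C_dQ_i}\) with \(s_i\in(c_dt,C_dt)\). This is false: take \(f=N\ind E\) with \(E\subset Q\) of measure \(t\lm Q/N\), so that \(f_Q=t\) and \(Q\) is a (minimal) witness at levels just below \(t\); for every \(s>0\) the set \(\{f>s\}\cap C_dQ\) is either \(E\) or empty, and the isoperimetric inequality only yields \(\sm{\mb{\{f>s\}}\cap C_dQ}\gtrsim\lm E^{(d-1)/d}\), which is far smaller than \(\sle(Q)^{d-1}\) when \(N\) is large. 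No single level, comparable to \(t\) or not, can pay for \(\sm{\partial Q}\); the budget only balances because the cube's boundary appears over the whole interval of levels \((\lambda_Q,f_Q)\) and must be charged against the variation of \(f\) integrated over all levels \emph{above} the average \(f_Q\) (in the example, up to \(N\)). This is exactly what the paper does for the low-density cubes: they are first thinned to an almost-dyadic family (\cref{pro_todyadic}), then the sparse mass estimate \cref{cla_mostmasssparseabove} converts \((f_{Q_0}-\lambda_{Q_0})\lm{Q_0}\) into an integral over \(\lambda>f_{Q_0}\) of the volume of medium-density dyadic subcubes, and only afterwards does the relative isoperimetric inequality (\cref{cor_isoperimetricsignificant}) enter, combined with a bounded-overlap selection (\cref{lem_disjointmass}) and a geometric sum over scales (\cref{eq_massbelowsums}) to control multiplicities. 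So the issue you flag as the "crux" (bounded multiplicity of an assignment \(Q_i\mapsto s_i\)) is not merely technical: the per-cube, single-level, comparable-level scheme itself is not viable, and the actual argument is multi-level in an essential way.

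Two smaller points. First, the \(L^{d/(d-1)}_\loc\) claim cannot be obtained from "\(\M f\in\BV_\loc\) plus embedding" as stated, because one needs \(\M f\in L^1_\loc\) (indeed finiteness a.e.) before its variation is even meaningful; the paper proves this separately in \cref{lem_mffinite}. Second, passing from the uncountable family of witness cubes to something where perimeter subadditivity and your selection arguments are legitimate requires care; the paper proves \cref{theo_main} for finite, dyadically complete families and then reaches \cref{theo_goal} through \cref{theo_maininfinite} by countable exhaustion, monotone convergence and lower semicontinuity of the variation.
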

\Cref{theo_goal} answers a question in the paper \cite{MR2041705} of Haj\l{}asz and Onninen from 2004 for the uncentered maximal operator over cubes.
This question has originally been raised for the classical centered and the uncentered Hardy-Littlewood maximal operator given by
\[
\Mu f(x)
=
\sup_{B\ni x}
\f1{\lm B}
\int_B f(y)\intd y
,
\]
where the supremum is taken over all balls \(B\ni x\),
but remains valid for a wide range of maximal operators.
We prove the variation bound corresponding to \cref{eq_goal} also for maximal operators which average over more general sets than cubes, see \cref{theo_maininfinite,cor_manycases}.
The requirements on the collection of sets over which the maximal operator averages are twofold: 
we require the collection to have a certain tiling property, \cref{defi_decomposition},
and each individual set in the collection has to satisfy a certain inner cone star condition, \cref{defi_star}.
The collection of all cubes is the simplest example to have these two properties.
While a ball satisfies the inner cone star condition,
the collection of all balls does not have the required tiling property and therefore one part of our argument does not apply to the classical uncentered Hardy-Littlewood maximal operator
\(
\Mu
,
\)
see \cref{rem_balls}.

There is a subtle difference between \cref{eq_goalp} for \(p=1\) and \cref{theo_goal}:
\cref{theo_goal} is more generous in the condition on \(f\) as any \(f\in W^{1,1}(\mathbb{R}^d)\) satisfies \(\var f=\lVert\nabla f\rVert_1<\infty\).
However its conclusion is slightly weaker because \(\var\M f<\infty\) only means that \(\M f\) has a finite Radon measure as its weak gradient, not necessarily a function in \(L^1(\mathbb{R}^d)\), even if we assume \(f\in W^{1,1}(\mathbb{R}^d)\).
And indeed, some maximal functions covered by our more general \cref{theo_maininfinite}, such as the dyadic maximal operator which only averages over dyadic cubes, satisfy \cref{eq_goal} but do not possess a weak gradient in \(L^1(\mathbb{R}^d)\) no matter how smooth \(f\) is.
Lahti showed in \cite{panubvsobolev}
that if the variation bound \cref{eq_goal}
holds for the Hardy-Littlewood maximal operator \(\Mu\), then for any function \(f\in W^{1,1}(\mathbb{R}^d)\)
we have \(\nabla\Mu f\in L^1(\mathbb{R}^d)\).
%should we avoid mentioning this?
For the cube maximal operator \(\M\) defined above this question remains open.

For a function \(f:\mathbb{R}\rightarrow\mathbb{R}\), the variation bound for the uncentered maximal function has already been proven in \cite{MR1898539} by Tanaka
and in \cite{MR2276629} by Aldaz and P\'erez L\'azaro.
Note that in one dimension, balls and cubes are the same.
For the centered Hardy-Littlewood maximal function Kurka proved the bound in \cite{MR3310075}.
The latter proof turned out to be much more complicated.
In \cite{MR2539555} Aldaz and P\'erez L\'azaro have proven the gradient bound for the uncentered maximal operator for block decreasing functions in \(W^{1,1}(\mathbb{R}^d)\) and any dimension \(d\).
In \cite{MR3800463} Luiro has done the same for radial functions.
More endpoint results are available for related maximal operators,
for example convolution maximal operators \cite{MR3063097,carneiro2019gradient},
fractional maximal operators \cite{MR1979008,MR3624402,MR3624402,beltran2019regularity,MR3912794,weigt2021endpoint,MR3319617},
and discrete maximal operators \cite{MR3091605},
as well as maximal operators on different spaces,
such as in the metric setting \cite{MR2328816}
and on Hardy-Sobolev spaces \cite{MR3891939}.
For more background information on the regularity of maximal operators
there is a survey \cite{carneiro2019regularity} by Carneiro.
Local regularity properties of the maximal function, which are weaker than the gradient bound of the maximal operator
have also been studied \cite{MR2550181,MR2868961}.
The question whether the maximal operator is a continuous operator on the gradient level is even more difficult to answer than its boundedness
because the maximal operator is not linear.
Some progress has already been made on this question in \cite{MR2280193,MR3695894,carneiro2020sunrise,beltran2021continuity}.

This is the fourth paper in a series \cite{weigt2020variation,weigt2020variationdyadic,weigt2021endpoint} on higher dimensional variation bounds of maximal operators,
using geometric measure theory and covering arguments.
In \cite{weigt2020variation} we prove \cref{eq_goal} for the uncentered Hardy-Littlewood maximal function of characteristic functions,
in \cite{weigt2020variationdyadic} we prove it for the dyadic maximal operator for general functions,
and in \cite{weigt2021endpoint} we prove the corresponding result for the fractional maximal operator.
Here we apply tools developed in \cite{weigt2020variation,weigt2020variationdyadic}.
Note that it is not possible to extend the variation bound from characteristic to simple and then general functions, using only the sublinearity of the maximal function.
The pitfall in that strategy is that while the maximal function is sublinear, this is not true on the gradient level:
there are characteristic functions \(f_1,f_2\) such that \(\var\M(f_1+f_2)>\var\M f_1+\var\M f_2\), see \cite[Example~5.2]{weigt2020variationdyadic}.

The starting point here and in \cite{weigt2020variation,weigt2020variationdyadic} is the coarea formula,
which expresses the variation of the maximal function in terms of the boundary of the distribution set.
We observe that the distribution set of the uncentered maximal function is the union of all cubes on which the function has the corresponding average.
We divide the cubes of the distribution set of the maximal function, into two groups:
we say that those which intersect the distribution set of the function a lot have \textit{high density}, and the others have \textit{low density}.
The union of the high density cubes looks similar to the distribution set of the function,
and for characteristic functions we have already bounded its boundary in \cite{weigt2020variation} due to a result in the spirit of the isoperimetric inequality.
The motivation for that bound came from \cite[Theorem~3.1]{MR2400262} by Kinnunen, Korte, Shanmugalingam and Tuominen.
In \cite{weigt2020variationdyadic} and in this paper the high density cubes are bounded using the same argument.
This bound is even strong enough to control the low density balls for characteristic functions in the global setting in \cite{weigt2020variation}.
But in the local setting in \cite{weigt2020variation} dealing with the low density balls is more involved.
It requires a careful decomposition of the function in parallel with the low density balls of the maximal function by dyadic scales.
In that paper it also relies on the fact that the function is a characteristic function.
In \cite{weigt2020variationdyadic} we devise a strategy for dealing with the low density cubes for general functions in the dyadic cube setting.
The advantage of the dyadic setting is that the decomposition of the low density cubes and the function
are a lot more straightforward because dyadic cubes only intersect in trivial ways.
Furthermore, the argument contains a sum over side lengths of cubes which converges as a geometric sum for dyadic cubes.
In \cite{weigt2021endpoint} we bound the fractional operator,
using that it disregards small balls,
which allows for a reduction from balls to dyadic cubes so that we can apply the result from \cite{weigt2020variationdyadic}.
Non-fractional maximal operators are much more sensitive, in that we have to deal with complicated intersections of balls or cubes of any size.
In this paper we represent the low density cubes of the maximal function by a subfamily of cubes with dyadic properties, which allows to apply the key dyadic result from \cite{weigt2020variationdyadic}.
In order to make the rest of the dyadic strategy of \cite{weigt2020variationdyadic} work here,
the function is decomposed in a similar way as in the local case of \cite{weigt2020variation}.

\paragraph{Acknowledgements}
I would like to thank Panu Lahti for helpful comments on the manuscript,
and my supervisor, Juha Kinnunen for all of his support.
I am indebted to the referees for their great effort, their careful reading and their many valuable comments which have lead to numerous improvements of the manuscript.
The author has been partially supported by the Vilho, Yrjö and Kalle V\"ais\"al\"a Foundation of the Finnish Academy of Science and Letters,
and the Magnus Ehrnrooth foundation.

\section{Preliminaries and core results}
\label{sec_results}

We work in the setting of functions of bounded variation, as in Evans-Gariepy \cite{MR3409135}, Section~5. 
Let \(\Omega\subset\mathbb{R}^d\) be an open set.
A function \(f\in L^1_\loc(\Omega)\) is said to have locally bounded variation
if for every open and compactly contained set \(V\subset \Omega\) we have
\[\sup\Bigl\{\int_Vf\div\varphi:\varphi\in C^1_{\tx c}(V;\mathbb{R}^d),\ |\varphi|\leq1\Bigr\}<\infty.\]
Such a function comes with a Radon measure \(\mu\) and a \(\mu\)-measurable function \(\sigma:\Omega\rightarrow\mathbb{R}^d\) which satisfies \(|\sigma(x)|=1\) for \(\mu\)-a.e.\ \(x\in\Omega\) and such that for all \(\varphi\in C^1_{\tx c}(\Omega;\mathbb{R}^d)\) we have
\[
\int_\Omega f\div\varphi
=
\int_\Omega\varphi\sigma\intd \mu
.
\]
We define the variation of \(f\) in \(\Omega\) by
\(\var_\Omega f=\mu(\Omega)\).
If \(f\) does not have locally bounded variation or if \(f\not\in L^1_\loc(\Omega)\) then we set \(\var_\Omega f=\infty\).

For a measurable set \(E\subset\mathbb{R}^d\) denote by \(\tc E\), \(\tb E\) and \(\ti E\) the topological closure, boundary and interior of \(E\), respectively.
The measure theoretic closure, boundary and interior of $E$ are defined as
\[
\mc E
=
\Bigl\{x:\limsup_{r\rightarrow0}\f{\lm{B(x,r)\cap E}}{r^d}>0\Bigr\}
,
\qquad
\mb E
=
\mc E
\cap
\mc{\mathbb{R}^d\setminus E}
\quad\text{and}\quad
\mi E
=
\mc E
\setminus
\mb E
.
\]
Note, that \(\mc E\subset\tc E\), \(\mb E\subset\tb E\) and \(\ti E\subset\mi E\).
For a cube, its measure theoretic boundary, closure and interior agree with the respective topological objects.
We also introduce the following measure theoretic set relations:
Let \(A,B\subset\mathbb{R}^d\) be Lebesgue measurable sets.
By \(A\msubset B\) we mean \(\lm{A\setminus B}=0\), and similarly
by \(A\meq B\) we mean \(A\msubset B\) and \(B\msubset A\),
and by \(A\msubsetneq B\) we mean \(A\msubset B\) and \(B\not\msubset A\).
All these measure theoretic notions are robust against changes with Lebesgue measure zero.

For \(\Omega\subset\mathbb{R}^d\) and a function \(f:\Omega\rightarrow\mathbb{R}\) we write
\[
\{f>\lambda\}
=
\{x\in\Omega:f(x)>\lambda\}
\] 
for the superlevelset of \(f\),
and we define \(\{f\geq \lambda\}\) similarly.
Denote by \(\H^{d-1}\) the \(d-1\)-dimensional Hausdorff measure.
The following coarea formula provides an interpretation of the variation that is useful for us:

\begin{lem}[{\cite[Theorem~3.40]{MR3409135}}]
\label{lem_coareabv}
Let \(\Omega\subset\mathbb{R}^d\) be an open set and assume that \(f\in L^1_\loc(\Omega)\).
Then
\[\var_\Omega f=\int_\mathbb{R}\sm{\mb{\{f\geq\lambda\}\cap \Omega}}\intd\lambda.\]
\end{lem}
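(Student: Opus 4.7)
The plan is to establish the equality via two opposing inequalities: the upper bound from a layer-cake representation of $f$, and the lower bound from smooth approximation combined with the classical Federer coarea formula. One may assume $f\in\BV_\loc(\Omega)$, since otherwise both sides equal $+\infty$.

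For the upper bound, write the layer-cake decomposition
\[
f
=
\int_0^\infty\ind{\{f\geq\lambda\}}\intd\lambda
-
\int_0^\infty\ind{\{-f>\lambda\}}\intd\lambda.
\]
By Fubini, for any $\varphi\in C^1_{\tx c}(\Omega;\mathbb{R}^d)$ with $|\varphi|\leq1$,
\[
\int_\Omega f\div\varphi\intd x
=
\int_\mathbb{R}\int_\Omega\ind{\{f\geq\lambda\}}\div\varphi\intd x\intd\lambda,
\]
after combining the two half-line integrals and absorbing the constant arising from $\ind{\{f\leq0\}}$, which contributes zero against $\div\varphi$. Taking the supremum inside the $\lambda$-integral and using the identity $\var\ind E=\sm{\mb E}$ for sets $E$ of finite perimeter (De Giorgi's structure theorem) yields
$\var_\Omega f\leq\int_\mathbb{R}\sm{\mb{\{f\geq\lambda\}}\cap\Omega}\intd\lambda$.

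For the reverse inequality, approximate $f$ strictly by $f_k\in C^\infty(\Omega)$ with $f_k\to f$ in $L^1_\loc(\Omega)$ and $\int_\Omega|\nabla f_k|\intd x\to\var_\Omega f$, which is possible by mollification and the Meyers--Serrin density theorem. The classical smooth coarea formula gives $\int_\Omega|\nabla f_k|\intd x=\int_\mathbb{R}\sm{\{f_k=\lambda\}\cap\Omega}\intd\lambda$. The layer-cake identity $\int_\Omega|f_k-f|\intd x=\int_\mathbb{R}\int_\Omega|\ind{\{f_k>\lambda\}}-\ind{\{f>\lambda\}}|\intd x\intd\lambda$ together with a subsequence extraction yields $L^1_\loc$ convergence of the superlevel sets for a.e.\ $\lambda$. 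Combining with the lower semi-continuity of perimeter and Fatou's lemma,
\[
\int_\mathbb{R}\sm{\mb{\{f>\lambda\}}\cap\Omega}\intd\lambda
\leq
\liminf_{k\to\infty}\int_\mathbb{R}\sm{\{f_k=\lambda\}\cap\Omega}\intd\lambda
=
\var_\Omega f.
\]
Since $\lm{\{f=\lambda\}}>0$ for at most countably many $\lambda$, the sets $\{f>\lambda\}$ and $\{f\geq\lambda\}$ share the same measure-theoretic boundary for a.e.\ $\lambda$, and the reverse inequality follows.

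The main obstacle is coordinating the $L^1_\loc$ convergence of level sets with the pointwise-a.e.\ application of perimeter lower semi-continuity: the null set of bad $\lambda$ produced by the subsequence argument must not interfere with the Fatou step, nor with the identification of $\mb{\{f>\lambda\}}$ with $\mb{\{f\geq\lambda\}}$. This is handled by the monotonicity of $\lambda\mapsto\{f\geq\lambda\}$, which confines the discrepancy to a countable set of $\lambda$, and by a diagonalisation ensuring that both convergence conditions hold simultaneously along a common subsequence.
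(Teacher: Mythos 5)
Your argument is correct and is essentially the proof of the result that the paper does not reprove but cites from Evans--Gariepy: the layer-cake/Fubini computation against \(\div\varphi\) for one inequality, and strict smooth approximation, the classical coarea formula, a.e.-\(\lambda\) convergence of level sets along a (diagonal) subsequence and lower semicontinuity of perimeter for the other, together with the countability argument converting \(\{f>\lambda\}\) into \(\{f\geq\lambda\}\), which is the only point the paper itself comments on. The one minor imprecision is attributing the identity \(\var_\Omega\ind E=\sm{\mb E\cap\Omega}\) to De Giorgi's structure theorem alone: since \(\mb E\) is the measure-theoretic (not reduced) boundary, and since a level set could a priori fail to have locally finite perimeter, both directions of your argument also need Federer's theorem \(\sm{\mb E\setminus\rb E}=0\) and his criterion that \(\sm{\mb E\cap\Omega}<\infty\) forces locally finite perimeter.
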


In \cite[Theorem~3.40]{MR3409135} the coarea formula is stated with the set \(\{f>\lambda\}\) in place of \(\{f\geq\lambda\}\),
but it can be proven for \(\{f\geq\lambda\}\) using the same proof.
For a set \(\Q\) of subsets of \(\mathbb{R}^d\) we denote
\[
\bigcup\Q=\bigcup_{Q\in\Q}Q
.
\]
The integral average of a function \(f\in L^1(Q)\) over a set $Q\subset\mathbb{R}^d$ with finite Lebesgue measure is denoted by
\[
f_Q
=
\f1{\lm Q}\int_Qf(x)\intd x
.
\]
For any functions \(f,g:\mathbb{R}^{n+k}\rightarrow\mathbb{R}\) we mean by \(f(a,b)\lesssim_a g(a,b)\) that for every \(a\in\mathbb{R}^n\) there exists a constant \(C>0\) such that for all \(b\in\mathbb{R}^k\) we have \(f(a,b)\leq C g(a,b)\).
For a ball \(B=B(x,r)\), a real number \(a\geq0\) and \(y\in\mathbb{R}^d\) denote \(aB=B(x,ar)\) and \(B+y=B(x+y,r)\).
For a set \(E\subset\mathbb{R}^d\) denote by \(\conv(E)\) its convex hull.

\begin{defi}
\label{defi_star}
For \(\Lambda\geq1\) we call a set \(Q\subset\mathbb{R}^d\) a \textit{\(\Lambda\)-star} if it has a ball denoted by \(\bll Q\) with radius denoted by \(\rad Q\) which satisfies \(\bll Q\subset Q\subset \Lambda\tc{\bll Q}\) and such that for any \(x\in Q\) we have \(\bcone{\bll Q}x\subset Q\).
\end{defi}

\begin{defi}
\label{defi_decomposition}
Let \(L\geq2\).
\begin{enumerate}
\item
\label{it_decomposition}
We say that a collection \(\P\) of subsets of \(\mathbb{R}^d\) is an \emph{\(L\)-decomposition} of a set \(Q\subset\mathbb{R}^d\) with finite Lebesgue measure if 
\(
Q
\msubset
\bigcup\P
,
\)
every \(P\in\P\) satisfies \(P\msubset Q\) and \(\lm Q/L\leq\lm P<\lm Q\),
and for any \(P_1,P_2\in\P\) with \(P_1\neq P_2\) we have \(\lm{P_1\cap P_2}=0\).
\item
\label{it_complete}
We say that a collection \(\Q\) of \(\Lambda\)-stars is \(L\)-\emph{complete} if it has a superset \(\P\supset\Q\) of \(\Lambda\)-stars such that every \(Q\in\P\) has an \(L\)-decomposition \(\P(Q)\subset\P\) and for all \(Q,R\in\Q\) and \(P\in\P(Q)\) with \(R\msubset P\) we have \(P\in\Q\).
\end{enumerate}
\end{defi}

\begin{rem}
\label{rem_cubeisstar}
Every bounded open convex set is a \(\Lambda\)-star for some \(\Lambda\geq1\), and every \(\Lambda\)-star is a John domain, see \cref{lem_Johnvstar}.
In particular every cube is a \(\sqrt d\)-star and the set of all cubes is \(2^d\)-complete since every cube can be decomposed into \(2^d\) cubes with half its side length.
\end{rem}

Even though a cube is convex unlike an \(L\)-complete \(\Lambda\)-star in general, our proofs barely simplify when considering only the special case of cubes.
In order to facilitate understanding, the reader is hence justified to imagine cubes when encountering \(L\)-complete \(\Lambda\)-stars in this manuscript.

Our main result is the following \lcnamecref{theo_main},
we prove it in \cref{sec_proof}:

\begin{theo}
\label{theo_main}
Let \(L\geq2\), \(\Lambda\geq1\), let \(\Q\) be a finite \(L\)-complete set of \(\Lambda\)-stars and let \(f\in L^1(\mathbb{R}^d)\) be a function with \(\var_{\bigcup\Q}f<\infty\).
For all \(\lambda\in\mathbb{R}\) denote \(\Q^\lambda=\{Q\in\Q:f_Q\geq\lambda\}\).
Then
\begin{equation}
\label{eq_main}
\int_{-\infty}^\infty
\smb{
\tb{\bigcup\Q^\lambda}
\setminus
\mc{\{f\geq\lambda\}}
}
\intd\lambda
\lesssim_{d,L,\Lambda}
\int_{-\infty}^\infty
\smb{
\mb{\{f\geq\lambda\}}
\cap
\bigcup
\{\ti Q:Q\in\Q\}
}
\intd\lambda
.
\end{equation}
\end{theo}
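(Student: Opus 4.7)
The plan is to bound the integrand on the left of \cref{eq_main} at each level $\lambda\in\mathbb{R}$ and then integrate. First I would fix $\lambda$ and split the family $\Q^\lambda$ into two subfamilies: the \emph{high density} cubes $\Q_h^\lambda=\{Q\in\Q^\lambda:\lm{Q\cap\{f\geq\lambda\}}\geq\delta\lm Q\}$ and the \emph{low density} cubes $\Q_l^\lambda=\Q^\lambda\setminus\Q_h^\lambda$, for a small dimensional threshold $\delta>0$ to be chosen. Since
\[
\tb{\bigcup\Q^\lambda}\setminus\mc{\{f\geq\lambda\}}
\subseteq
\Bigl(\tb{\bigcup\Q_h^\lambda}\setminus\mc{\{f\geq\lambda\}}\Bigr)
\cup
\tb{\bigcup\Q_l^\lambda},
\]
it suffices to handle the two pieces separately.

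For the high density cubes I would apply the isoperimetric-type bound from \cite{weigt2020variation}, reused in \cite{weigt2020variationdyadic}: since every $Q\in\Q_h^\lambda$ intersects $\{f\geq\lambda\}$ in a fixed fraction of its volume, the portion of $\tb{\bigcup\Q_h^\lambda}$ not already in $\mc{\{f\geq\lambda\}}$ is controlled by $\mb{\{f\geq\lambda\}}\cap\bigcup\Q$. Integrating in $\lambda$ produces exactly the high density contribution to the right-hand side of \cref{eq_main}.

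The main difficulty is the low density cubes, where most of the mass forcing $f_Q\geq\lambda$ concentrates on a small subset of $Q$, so the cube's boundary cannot be charged directly to $\mb{\{f\geq\lambda\}}$. The strategy is to replace $\Q_l^\lambda$ by a substitute family $\tilde\Q^\lambda$ with dyadic structure to which the key result of \cite{weigt2020variationdyadic} applies. For each $Q\in\Q_l^\lambda$ I would pick a suitable anchor $Q_0\in\Q$ containing $Q$ and a dyadic cube $\tilde Q\in\dy{Q_0}$ of comparable scale and position; dyadic completeness of $\Q$ guarantees $\tilde Q\in\Q$, so all required averages remain accessible. Simultaneously I would decompose $f$ along the relevant dyadic scales, following the local argument of \cite{weigt2020variation}, so that the dyadic maximal function of each piece inherits the averaging condition $f_Q\geq\lambda$ at a comparable threshold.

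After this reduction $\tb{\bigcup\Q_l^\lambda}$ is essentially covered by $\tb{\bigcup\tilde\Q^\lambda}$, and applying the main dyadic variation bound from \cite{weigt2020variationdyadic} to $\tilde\Q^\lambda$ and the decomposed function would yield
\[
\int_{-\infty}^\infty\smb{\tb{\bigcup\tilde\Q^\lambda}}\intd\lambda
\lesssim
\int_{-\infty}^\infty\smb{\mb{\{f\geq\lambda\}}\cap\bigcup\Q}\intd\lambda,
\]
which combines with the high density estimate to give \cref{eq_main}. The hardest step will be constructing the dyadic substitute $\tilde\Q^\lambda$ in tandem with the decomposition of $f$, while preserving the averaging inequality: cubes in $\Q$ overlap in arbitrarily complicated ways, unlike dyadic cubes, and the sums over scales that telescope geometrically in the purely dyadic setting must be controlled by hand here, which is why the parallel function decomposition from the local argument of \cite{weigt2020variation} is needed.
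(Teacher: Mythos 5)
Your treatment of the high density cubes matches the paper's (it runs the covering result \cref{pro_levelsets_finite_g} from \cite{weigt2020variation} and integrates in \(\lambda\)), but the low density part has a genuine gap, and it is exactly where the content of the proof lies. Your plan is to replace each low density cube \(Q\) by a dyadic cube \(\tilde Q\in\dy{Q_0}\) of comparable scale and position and then to claim that \(\tb{\bigcup\Q_l^\lambda}\) is ``essentially covered'' by \(\tb{\bigcup\tilde\Q^\lambda}\), so that the main theorem of \cite{weigt2020variationdyadic} applies. Neither step works as stated. The boundary of a union of cubes is not stable under perturbing the individual cubes: misaligned, arbitrarily oriented low density cubes can produce a union whose boundary is in no way controlled by the boundary of a union of nearby grid-aligned substitutes, so no surface-measure comparison between the two unions is available. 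Moreover, the substitutes \(\tilde Q\) live in the grids \(\dy{Q_0}\) of many different anchors \(Q_0\) (different positions, scales and orientations), whereas the dyadic theorem of \cite{weigt2020variationdyadic} is a statement about a single dyadic structure; it cannot be invoked as a black box across incompatible grids. Finally, ``decompose \(f\) along the relevant dyadic scales'' is precisely the step you would have to make precise: without it there is nothing preventing the same piece of \(\mb{\{f\geq\lambda\}}\) from being charged by many anchors, and the scale sums you acknowledge do not telescope by themselves.

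For comparison, the paper's route through the low density cubes is quite different in mechanism. It first passes to maximal cubes (\cref{lem_maximalcubessuffice}) and uses a three-way split: \(\Q_1^\lambda\) (small intersection with \(\{f\geq\lambda\}\) but large intersection with the high density union) is absorbed by a second application of \cref{pro_levelsets_finite_g}, and only the remaining class \(\Q_2^\lambda\) is treated as genuinely low density. Then, instead of substituting cubes, a greedy selection (\cref{pro_todyadic}) extracts a sparse family \(\S\) so that every cube of \(\Q_2^\lambda\) overlaps a selected cube \(S\) in a definite fraction of its volume; \cref{pro_levelsets_finite_g}, applied now with \(E=S\), charges the whole group's boundary to \(\sm{\partial S}\) over the range \(\lambda\in[\lambda_S,f_S]\), giving \(\int\smb{\tb{\bigcup\Q_2^\lambda}}\intd\lambda\lesssim\sum_{S\in\S}(f_S-\lambda_S)\sm{\partial S}\). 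Only at this point does the dyadic input from \cite{weigt2020variationdyadic} enter, and only as the single-cube sparse mass estimate \cref{cla_mostmasssparseabove}, which converts \((f_S-\lambda_S)\lm S\) into volumes of medium-density dyadic subcubes of \(S\). The remaining work --- \cref{lem_disjointmass} to organize these subcubes across all anchors with bounded overlap (this is where dyadic completeness is actually used, via the maximality of \(\tilde\Q\)), \cref{cor_isoperimetricsignificant} to turn volumes back into \(\smb{\mb{\{f\geq\lambda\}}\cap\cdot}\), and \cref{lem_alsoscaleddisjoint} together with a geometric sum over scales to avoid double counting --- is exactly the machinery your sketch leaves unspecified, so the proposal does not yet constitute a proof.
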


\begin{theo}
\label{theo_maininfinite}
Let \(L\geq2\) and \(\Lambda\geq1\).
Given an open set \(\Omega\subset\mathbb{R}^d\), let \(\Q\) be an \(L\)-complete set of \(\Lambda\)-stars \(Q\) with \(Q\subset\Omega\) and let \(f\in L^1_\loc(\Omega)\) be a function with \(\var_\Omega f<\infty\).
Then for every \(Q\in\Q\) we have \(\int_Q |f| <\infty\),
and the maximal function defined by
\[
\M_\Q f(x)
=
\max
\biggl\{
f(x)
,
\sup_{Q\in\Q,\ x\in Q}
\f1{\lm Q} \int_Q f(y)\intd y
\biggr\}
\]
belongs to \(L^{d/(d-1)}_\loc(\Omega)\) and satisfies
\[
\var_\Omega\M_\Q f
\lesssim_{d,L,\Lambda}
\var_\Omega f
.
\]
\end{theo}

\Cref{theo_maininfinite} follows from \cref{theo_main} by the coarea formula \cref{lem_coareabv} and since
\[
\{\M_\Q f\geq\lambda\}
=
\bigcup\Q^\lambda\cup\{f\geq\lambda\}
\]
if \(\Q\) is finite,
in combination with integrability and approximation arguments.
We lay out the details of this reasoning in \cref{subsec_uncountable}.

\begin{rem}
It is not clear if \cref{theo_main,theo_maininfinite} hold also without the assumption that \(\Q\) is \(L\)-complete or if we omit taking the maximum with \(f\) in the definition of \(\M_\Q f\).
The following example shows that \cref{theo_main,theo_maininfinite} fail if we drop both conditions at the same time:
Let \(f=\ind{(0,1)^d}\) and for \(N\in\mathbb{N}\) let \(\Q_N\) be the set of dyadic cubes with side length at least \(2\),
and the cubes \((n_12^{-N},(n_1+1)2^{-N})\times\ldots\times(n_d2^{-N},(n_d+1)2^{-N})\subset(0,1)^d\) where \(n_1,\ldots,n_d\) are integers such that \(n_1+\ldots+n_d\) is even.
Then the maximal operator that averages over all cubes in \(\Q_N\) has variation of the order \(2^{Nd}\cdot2^{-N(d-1)}=2^N\) which goes to infinity for \(N\rightarrow\infty\).
\end{rem}

\begin{lem}
\label{rem_localstars}
If \(\Omega\subset\mathbb{R}^d\) and \(\Q\) is an \(L\)-complete set of measurable sets then so are \(\{Q\in\Q:Q\msubset\Omega\}\) and \(\{Q\in\Q:\mc Q\subset\Omega\}\).
\end{lem}

\begin{proof}
This follows from the fact that for every \(Q\in\Q\) we have \(Q\msubset\Omega\) if and only if for all \(P\in\P(Q)\) we have \(P\msubset\Omega\),
and \(\mc Q\subset\Omega\) if and only if for all \(P\in\P(Q)\) we have \(\mc P\subset\Omega\).
\end{proof}

\begin{pro}
\label{rem_tiortc}
Let \(\Q\) be a set of \(\Lambda\)-stars and let \(\tilde\Q\) be a set of subsets of \(\mathbb{R}^d\) such that for each \(Q\in\Q\) there exists a \(\tilde Q\in\tilde\Q\), and, conversely, for each \(\tilde Q\in\tilde\Q\) there exists a \(Q\in\Q\), such that \(\tilde Q\subset\tc Q\) and \(\ti Q\msubset\tilde Q\).
Then for almost every \(x\in\Omega\) we have \(\M_{\tilde\Q}f(x)=\M_\Q f(x)\) and thus \cref{theo_maininfinite} holds with \(\tilde\Q\) in place of \(\Q\).
If \(\Q\) and \(\tilde\Q\) are finite then also \cref{theo_main} holds for \(\tilde\Q\) in place of \(\Q\) with the topological boundary and interior replaced by the measure theoretic boundary and interior in \cref{theo_main}.
If in addition \(\ti Q\subset\tilde Q\) then \cref{theo_main} holds for \(\tilde\Q\) as stated with the topological boundary and interior.

Particular examples for \(\tilde\Q\) are \(\{\tc Q:Q\in\Q\}\) and \(\{\ti Q:Q\in\Q\}\).
\end{pro}
We prove \cref{rem_tiortc} in \cref{subsec_openclosed}.

\begin{rem}
\Cref{theo_maininfinite} can fail for \(\tilde\Q\) from \cref{rem_tiortc} if we replace the condition \(\tilde Q\subset\tc Q\) by \(\tilde Q\msubset\tc Q\).
For example take a set \(A\subset[-2,-1]^d\) with
\[
\sm{\mb A}=\infty
,\qquad
\Q=\{[0,1]^d\}
,\qquad
\tilde\Q=\{[0,1]^d\cup\{x\}:x\in A\}
,\quad\tx{and}\quad
f=\ind{[0,1]^d}
.
\]
Then
\[
\var f=\sm{\tb{[0,1]^d}}<\infty
,
\]
but
\(
\M_{\tilde\Q}f=\ind{[0,1]^d\cup A}
\)
and thus
\[
\var\M_{\tilde\Q}
=
\sm{\tb{[0,1]^d}}+\sm{\mb A}
=
\infty
.
\]
\end{rem}

\begin{cor}
\label{cor_manycases}
For any open subset \(\Omega\subset\mathbb{R}^d\) and any \(f\in L^1_\loc(\Omega)\) each maximal function \(\hat\M f\) from the following list \cref{it_cubesetc,it_smallstars,it_localstars,it_closedinsteadofopen} belongs to \(L^{d/(d-1)}_\loc(\Omega)\) and satisfies
\[
\var_\Omega\hat\M f
\leq C_{d,L,\Lambda}
\var_\Omega f
.
\]
\begin{enumerate}
\item
\label{it_cubesetc}
Let \(\Q\) be the set of cubes \(Q\) contained in \(\Omega\) and let
\[
\hat\M f(x)
=
\sup_{Q\in\Q,\ x\in Q}
\f1{\lm Q} \int_Q f(y)\intd y
.
\]
Here, we may let \(\Q\) consist of cubes with any orientation, of only axes parallel cubes or of only dyadic cubes.
In particular it supersedes the main results in \cite{weigt2020variationdyadic}.
We may consider cubes to be closed or open and we may replace the condition \(Q\subset\Omega\) by \(\tc Q\subset\Omega\).

Instead of cubes \(\Q\) may consist of any open convex shapes that can be decomposed into more than one boundedly rescaled and rotated version of itself.
\item
\label{it_smallstars}
Let \(\Q\) be an \(L\)-complete set of \(\Lambda\)-stars \(Q\subset\Omega\) such that for every \(\varepsilon>0\) and almost every \(x\in\Omega\) there exists a \(Q\in\Q\) with \(x\in Q\) and \(\rad Q<\varepsilon\).
Define
\[
\hat\M f(x)
=
\sup_{Q\in\Q,\ x\in Q}
\f1{\lm Q} \int_Q f(y)\intd y
.
\]
\item
\label{it_localstars}
Let \(\Q\) be an \(L\)-complete set of \(\Lambda\)-stars \(Q\subset\mathbb{R}^d\) and let
\[
\Q_\Omega=\{Q\in\Q:Q\msubset\Omega\}
,\qquad\tx{or}\qquad
\Q_\Omega=\{Q\in\Q:\mc Q\subset\Omega\}
.
\]
Define
\[
\hat\M f(x)
=
\max
\biggl\{
f(x)
,
\sup_{Q\in\Q_\Omega,\ x\in Q}
\f1{\lm Q} \int_Q f(y)\intd y
\biggr\}
.
\]
\item
\label{it_closedinsteadofopen}
In any of the previous cases we may replace the condition \(x\in Q\) in the supremum by \(x\in\tc Q\) or \(x\in\ti Q\).
\end{enumerate}
\end{cor}

\begin{proof}[Proof of \cref{theo_goal,cor_manycases}]
In case \cref{it_smallstars} a Lebesgue density theorem holds for the stars in \(\Q\), which implies \(\hat\M f(x)\geq f(x)\) and thus \(\hat\M f(x)=\M_\Q f(x)\) for almost every \(x\in\Omega\).
Since the variation of a function does not change under modifications on sets of Lebesgue measure zero we can conclude \cref{it_smallstars} from \cref{theo_maininfinite}.
\Cref{it_localstars} follows from \cref{rem_localstars}.
\Cref{it_cubesetc} follows from \cref{rem_cubeisstar,it_smallstars,it_localstars}.
\Cref{theo_goal} is a special case of \cref{it_cubesetc}.
By \cref{rem_tiortc} the maximal function only changes on a set of measure zero when replacing the condition \(x\in Q\) in the supremum by \(x\in\tc Q\) or \(x\in\ti Q\) and we can conclude \cref{it_closedinsteadofopen}.
\end{proof}

\begin{rem}
It is usually more difficult to prove regularity results for local maximal operators than for global maximal operators,
because some arguments use that the maximal operator also takes into account certain blow-ups of balls or cubes.
In fact for the fractional maximal operator, gradient bounds which hold for the global operator fail for the local operator, see \cite[Example~4.2]{MR3319617}.
Not so here, our results hold for domains and by the same proof as for \(\mathbb{R}^d\).
\end{rem}

\begin{rem}
In \cite{weigt2020variationdyadic} we assumed that the dyadic maximal operator averages only over cubes which are compactly contained in the domain \(\Omega\) in order to ensure local integrability of the maximal function.
This condition turned out to be not necessary, provided that \(\var_\Omega f<\infty\).
\end{rem}

\begin{rem}
\label{rem_balls}
The proofs in this manuscript yield partial results also in the case of balls, i.e.\ for the classical uncentered Hardy-Littlewood maximal operator \(\Mu\).
\begin{enumerate}
\item Balls are \(1\)-stars but the collection of all balls is not \(L\)-complete for any \(L\geq2\) since a ball cannot be decomposed into a finite set of smaller balls.
That means \cref{theo_maininfinite} does not apply to the classical uncentered Hardy-Littlewood maximal operator \(\Mu\).
\item However, balls are not forbidden in the collection of sets \(\Q\) per se; one can construct a superset of the set of all balls which for some \(L\geq2\), \(\Lambda\geq1\) is an \(L\)-complete set of \(\Lambda\)-stars.
That means \cref{theo_maininfinite} applies to a maximal operator that averages over all balls if we also allow it to average over certain additional sets.
\item
\label{it_massaboveballs}
We only use \(L\)-completeness in \cref{sec_sparsemassabove}, while the arguments in \cref{subsec_preparations,sec_combination,sec_covering,sec_dyadic,sec_organizingmass,sec_approximation} work for any collection of \(\Lambda\)-stars and thus in particular for balls.
That means we provide several tools that may be directly used in further attempts to prove the endpoint variation bound \cref{eq_goal} for the uncentered Hardy-Littlewood maximal operator.
In particular, it suffices to find a suitable replacement of \cref{eq_goallambda} for balls in order to complete the proof.
\end{enumerate}
\end{rem}

\begin{rem}
The notion of a \(\Lambda\)-star may appear to be only slightly more general than that of a classical \(\Lambda\)-John domain, see \cref{defi_John,lem_Johnvstar}.
One key difference however is that a John domain may have infinite perimeter while a \(\Lambda\)-star may not.
So take a John domain \(Q\) with infinite perimeter.
Then for a function \(f\in L^1(\mathbb{R}^d)\) with \(f=0\) on \(\mathbb{R}^d\setminus Q\) and \(f_Q>0\), the maximal function
\[
\M_{\{Q\}}f(x)=\max\{f(x),f_Q\ind Q(x)\}
\qquad\tx{has}\qquad
\var\M_{\{Q\}}f\geq f_Q\sm{\mb Q}=\infty
,
\]
which means that \cref{theo_main,theo_maininfinite} do not hold if we allow \(\Lambda\)-John domains instead of \(\Lambda\)-stars.
It is unclear however if \cref{theo_maininfinite} holds if \(\Q\) is for example the set of all \(\Lambda\)-John domains.
\end{rem}

\begin{rem}
The space \(L^1_\loc(\Omega)\) is not an appropriate domain for \(\M_\Q\)
as has already been observed in \cite[footnote (2), p.~170]{MR2041705}.
For example the function \(f:x\mapsto|x|^2\) belongs to \(L^1_\loc(\Omega)\) but its classical Hardy-Littlewood maximal function is infinite everywhere.
If we strengthen the assumption to \(f\in L^1(\Omega)\), then \(\M_\Q f\) is finite almost everywhere by the weak endpoint \(p=1\) of the Hardy-Littlewood maximal function theorem.
\Cref{lem_mffinite} shows that an alternative way ensure the almost everywhere finiteness of \(\M_\Q f\) is to require \(\var_\Omega f<\infty\) in addition to \(f\in L^1_\loc(\Omega)\).
\end{rem}

\begin{rem}
\Cref{theo_goal,theo_maininfinite,cor_manycases} also extend to the maximal function of the absolute value due to
\(\var\M_\Q(|f|)\lesssim_{d,L,\Lambda}\var|f|\leq\var f\).
\end{rem}

%\begin{rem}
%In \Cref{pro_goal} we could also prove \(\var_U\M_\Q f\leq C_d\var_U f\) for any Borel set \(U\subset\Omega\) with \(\bigcup_{Q\in\Q}\ti Q\subset U\)
%because as noted in \cite[Theorem~3.40]{MR1857292}, the coarea formula \Cref{lem_coareabv} also holds for Borel sets.
%\end{rem}

\section{The finite case}
\label{sec_proof}

In this \lcnamecref{sec_proof} we prove \cref{theo_main}.

\subsection{Preparations}
\label{subsec_preparations}

Given a point \(x\in\mathbb{R}^d\) we denote its Euclidean length by \(|x|=\sqrt{x_1^2+\ldots+x_d^2}\).
For a set \(E\subset\mathbb{R}^d\) define \(\dist(x,E)=\inf_{y\in E}|x-y|\).

\begin{defi}
\label{defi_John}
For \(\Lambda\geq1\) a bounded open set \(Q\subset\mathbb{R}^d\) is a \emph{\(\Lambda\)-John domain} if it has a distinguished point \(x\in Q\) such that for any \(y\in Q\) there is a continuous map \(\gamma:[0,1]\to Q\) with \(\gamma(0)=x\) and \(\gamma(1)=y\) such that for any \(0\leq t\leq 1\) we have
\[
\dist(\gamma(t),\tb Q)
\geq
|y-\gamma(t)|/\Lambda
.
\]
\end{defi}

\begin{lem}
\label{lem_Johnvstar}
Let \(\Lambda\geq1\).
\begin{enumerate}
\item
\label{it_starisJohn}
Every \(\Lambda\)-star is a \(\Lambda\)-John domain.
\item
\label{it_Johnhasball}
Conversely, for every \(\Lambda\)-John domain \(Q\) there is a ball \(\bll Q\) with \(\bll Q\subset Q\subset\Lambda\tc{\bll Q}\).
\end{enumerate}
\end{lem}

\begin{proof}
For a \(\Lambda\)-star \(Q\) set \(x\) to be the center of \(\bll Q\) and for \(y\in Q\) take \(\gamma:t\mapsto(1-t)x+ty\).
This proves \cref{it_starisJohn}.
For the proof of \cref{it_Johnhasball} let \(x\) be the distinguished point of \(Q\) and let \(y_1,y_2,\ldots\in Q\) be a sequence of points with
\[
|y_n-x|\to\sup\{|x-y|:y\in Q\}=r
.
\]
Let \(\gamma_n\) be the map witnessing that \(Q\) is a John domain for \(y_n\).
Then
\[
\dist(x,\tb Q)
=
\dist(\gamma_n(0),\tb Q)
\geq
|y_n-\gamma_n(0)|/\Lambda
=
|y_n-x|/\Lambda
\to
r/\Lambda
.
\]
Therefore,
\(
B(x,r/\Lambda)
\subset
Q
\subset
\tc{
B(x,r)
}
.
\)
\end{proof}

Next, we collect a few elementary geometric properties about \(\Lambda\)-stars.

\begin{lem}
\label{lem_smallclosure}
Let \(\Lambda\geq1\) and let \(Q\) be a \(\Lambda\)-John domain.
Then
\begin{enumerate}
\item
\label{cor_starmeasurable}
\(Q\) is Lebesgue measurable
\item
\label{it_startcnearti}
and \(\mc{\ti Q}=\tc Q\).
\end{enumerate}
If in addition \(Q\) is a \(\Lambda\)-star then
\begin{enumerate}
\setcounter{enumi}{2}
\item
\label{it_interiorofboundaryconeinstar}
for any \(x\in\tc Q\) we have \(\ti\conv(\bll Q\cup\{x\})\subset Q\),
\item
\label{it_startcandtiarestars}
\(\tc Q\) and \(\ti Q\) are also \(\Lambda\)-stars
\item
\label{it_startopeneqmesopen}
and \(\ti Q=\mi Q\).
\item
\label{it_opencompletestars}
If \(\Q\) is an \(L\)-complete set of \(\Lambda\)-stars then so is \(\ti\Q\).
\end{enumerate}
\end{lem}

\begin{proof}
Since \(\ti Q\) is open and thus Lebesgue measurable the set \(\mc{\ti Q}\) is well defined and it follows from the definitions that \(\mc{\ti Q}\subset\tc Q\).
For the reverse inclusion let \(x\in\tc Q\) and let \(\varepsilon>0\).
Then there exists a \(\tilde x\in Q\) with \(|\tilde x-x|<\varepsilon\).
Let \(\gamma\) be the curve corresponding to \(\tilde x\).
Because it is continuous there is a \(0<t<1\) with \(|\gamma(t)-\tilde x|=\varepsilon\) and by definition
\(
B(\gamma(t),\varepsilon/\Lambda)
\subset
Q
.
\)
Since \(\ti Q\) is open it is measurable and thus
\[
\lm{B(x,(2+1/\Lambda)\varepsilon)\cap\ti Q}
\geq
\lm{B(\gamma(t),\varepsilon/\Lambda)}
=
(2\Lambda+1)^{-d}
\lm{B(x,(2+1/\Lambda)\varepsilon)}
\]
and we can conclude \(x\in\mc{\ti Q}\), which implies \cref{it_startcnearti}, and hence
\[
Q\setminus\ti Q\subset\tc Q\setminus\ti Q=\mc{\ti Q}\setminus\ti Q
.
\]
By the Lebesgue density theorem we have \(\lm{\mc{\ti Q}\setminus\ti Q}=0\) and thus we can conclude \(\lm{Q\setminus\ti Q}=0\) which entails \cref{cor_starmeasurable}.
Assume in addition that \(Q\) is a \(\Lambda\)-star.
It is straightforward to show that
\[
\bll Q\subset\ti Q\subset\tc Q\subset\Lambda\tc{\bll Q}
.
\]
Let \(y\in\ti\conv(\bll Q\cup\{x\})\).
Then for some \(\varepsilon>0\) we have \(B(y,2\varepsilon)\subset\conv(\bll Q\cup\{x\})\), which implies
\[
B(y,\varepsilon)\subset\conv(\bll Q\cup\{\tilde x\})\subset Q
\]
and thus \(y\in\ti Q\).
We can conclude \cref{it_interiorofboundaryconeinstar} which implies \(\conv(\bll Q\cup\{x\})\subset\tc Q\) and thus \(\tc Q\) is a \(\Lambda\)-star.

Next denote by \(c\) the center of \(\bll Q\) and let \(x\in\mi Q\) with \(x\neq c\).
Then there exists a \(y\in Q\) with
\[
\langle y-x,x-c\rangle>\sqrt{1-\Lambda^{-2}}|y-x||x-c|
\]
and thus
\begin{align*}
\langle y-x,y-c\rangle
&=
\langle y-x,x-c\rangle
+
\langle y-x,y-x\rangle
\\
&>
\bigl[
\sqrt{1-\Lambda^{-2}}|x-c|+|y-x|
\bigr]
|y-x|
\\
&\geq
\sqrt{1-\Lambda^{-2}}|y-x||y-c|
.
\end{align*}
Since the opening angle of the cone \(\conv(\bll Q\cup\{y\})\) is at least \(\arcsin(\Lambda)\) we can conclude
\[
x\in\ti\conv(\bll Q\cup\{y\})\subset\ti Q
.
\]
Since \(\ti Q\subset\mi Q\) this implies \cref{it_startopeneqmesopen}.
Moreover, we obtain
\[
\conv(\bll Q\cup\{x\})\subset\ti\conv(\bll Q\cup\{y\})\subset\ti Q
\]
from which we can deduce that \(\ti Q\) is a \(\Lambda\)-star, finishing the proof of \cref{it_startcandtiarestars}.

If \(\Q\) is an \(L\)-complete set of \(\Lambda\)-stars then by \cref{it_startcandtiarestars} the set \(\ti\Q\) consists of \(\Lambda\)-stars as well.
Moreover, by \cref{it_startopeneqmesopen} and the Lebesgue density theorem for any \(Q\in\Q\) we have \(\ti Q\meq Q\).
Since the definition of \(L\)-completeness is resistant towards changes of Lebesgue measure zero we can conclude that \(\ti\Q\) is \(L\)-complete because \(\Q\) is \(L\)-complete.
This concludes the proof of \cref{it_opencompletestars}.
\end{proof}

Instead of explicitly evoking \cref{lem_smallclosure}\cref{cor_starmeasurable}, for the rest of this manuscript we will tacitly remember that a \(\Lambda\)-star is a Lebesgue measurable set when needed.

By \cite[Theorem~107]{lecturenoteshajlasz} every John domain satisfies a Poincar\'e inequality.
It is our most important building block and used repeatedly in this manuscript.

\begin{theo}[{\cite[Theorem~107]{lecturenoteshajlasz}}]
\label{theo_poincare}
Let \(\Lambda\geq1\), let \(Q\) be a \(\Lambda\)-John domain and let \(f\in L^1(Q)\) with locally bounded variation.
Then
\[
\Bigl(
\int_Q
|f-f_Q|^{d/(d-1)}
\Bigr)^{(d-1)/d}
\lesssim_{d,\Lambda}
\var_Q f
.
\]
\end{theo}

\begin{rem}
\label{rem_medianpoincare}
By Hölder's inequality applied to \cref{theo_poincare} we have
\begin{equation}
\label{eq_oneonepoincare}
\int_Q
|f-f_Q|
\lesssim_{d,\Lambda}
\lm Q^{\f1d}
\var_Q f
.
\end{equation}
Let \(f\) be finite almost everywhere.
It is an exercise to show that with the median
\[
m
=
\inf\{\lambda\in\mathbb{R}:\lm{Q\cap\{f>\lambda\}}\leq\lm Q/2\}
\]
we have
\[
\f12
\int_Q
|f-f_Q|
\leq
\int_Q
|f-m|
=
\inf_{c\in\mathbb{R}}
\int_Q
|f-c|
\leq
\int_Q
|f-f_Q|
\]
with all values in this chain of inequalities understood to be infinite if \(f\not\in L^1(Q)\).
Let \(f\in L^1_\loc(Q)\) with \(\var_Qf<\infty\).
Then \(f\) is finite almost everywhere, and thus its median is finite too.
Approximating \(f\) by its truncations
\[
f_n=\max\{\min\{f,m+n\},m-n\}\in L^1(Q)
\]
which for \(n\) large enough have the same median as \(f\) we can conclude
\begin{align*}
\int|f-m|
&=
\lim_{n\rightarrow\infty}
\int|f_n-m|
\leq2
\lim_{n\rightarrow\infty}
\int|f_n-(f_n)_Q|
\lesssim_{d,\Lambda}
\lim_{n\rightarrow\infty}
\lm Q^{\f1d}
\var_Qf_n
\\
&\leq
\lm Q^{\f1d}
\var_Q f
<\infty
,
\end{align*}
i.e.\ \cref{eq_oneonepoincare} holds also with \(f_Q\) replaced by \(m\).
Moreover, we can conclude \(f\in L^1(Q)\) and thus \cref{theo_poincare,eq_oneonepoincare} still hold if we weaken the assumption \(f\in L^1(Q)\) to \(f\in L^1_\loc(Q)\).
\end{rem}

The relative isoperimetric inequality holds as a corollary of \cref{theo_poincare}.

\begin{cor}
\label{lem_isoperimetric}
Let \(\Lambda\geq1\), let \(Q\) be a \(\Lambda\)-John domain and let \(E\subset\mathbb{R}^d\) be Lebesgue measurable.
Then
\[
\min\{\lm{Q\cap E},\lm{Q\setminus E}\}^{d-1}
\lesssim_{d,\Lambda}
\sm{Q\cap\mb E}^d
.
\]
\end{cor}

\begin{proof}
Since \(\mb E=\mb{(\mathbb{R}^d\setminus E)}\) it suffices to consider the case \(\lm{Q\cap E}\leq\lm Q/2\).
Then \((\ind E)_Q\leq1/2\) and thus \(|\ind E-(\ind E)_Q|\geq 1/2\) on \(Q\cap E\) and thus
\[
\int_Q
|\ind E-(\ind E)_Q|^{d/(d-1)}
\geq
\int_{Q\cap E}
(1/2)^{d/(d-1)}
=
(1/2)^{d/(d-1)}
\lm{Q\cap E}
\]
and by the coarea formula \cref{lem_coareabv} we have
\[
\var_Q\ind E
=
\int_0^1
\sm{Q\cap\mb E}
=
\sm{Q\cap\mb E}
.
\]
By \cref{theo_poincare} we can conclude
\[
\lm{Q\cap E}^{d-1}
\leq2^d
\Bigl(
\int_Q
|\ind E-(\ind E)_Q|^{d/(d-1)}
\Bigr)^{d-1}
\lesssim_{d,\Lambda}
(\var_Q\ind E)^d
=
\sm{Q\cap\mb E}^d
.
\qedhere
\]
\end{proof}

\begin{cor}
\label{lem_isoperimetricboundedaway}
For \(\Lambda\geq1\) and \(\lambda>0\), let \(Q\) be a \(\Lambda\)-John domain and let \(E\subset\mathbb{R}^d\) with
\[
\lambda
\leq
\f{
\lm{Q\cap E}
}{
\lm Q
}
\leq
1-\lambda
.
\]
Then
\[
\sm{Q\cap\mb E}
\gtrsim_{d,\Lambda}
\lambda^{(d-1)/d}
\lm Q^{(d-1)/d}
.
\]
\end{cor}

\begin{proof}
By \cref{lem_isoperimetric} we have
\[
\lambda
\lm Q
\leq
\min\{\lm{Q\cap E},\lm{Q\setminus E}\}
\lesssim_{d,\Lambda}
\sm{Q\cap\mb E}^{d/(d-1)}
.
\qedhere
\]
\end{proof}

\begin{defi}
\label{defi_qli}
Let \(L\geq2\), \(\Lambda\geq1\) and let \(\Q\) be a finite set of \(\Lambda\)-stars and let \(f\in L^1(\mathbb{R}^d)\).
For \(\lambda\in\mathbb{R}\) set \(\Q^\lambda=\{Q\in\Q:f_Q\geq\lambda\}\).
We consider the partition
\[
\Q^\lambda
=
\Q_0^\lambda\cup\Q_1^\lambda\cup\Q_2^\lambda
,
\]
where the set \(\Q_0^\lambda\) consists of all stars \(Q\in\Q^\lambda\) with
\[
\lm{Q\cap\{f\geq\lambda\}}\geq(4L)^{-1}\lm Q
,
\]
the set \(\Q_1^\lambda\) consists of all stars \(Q\in\Q^\lambda\setminus\Q_0^\lambda\) with
\[
\lmb{Q\cap\bigcup\Q_0^\lambda}\geq2^{-d-1}\Lambda^{-d}\lm Q
,
\]
and \(\Q_2^\lambda=\Q^\lambda\setminus(\Q_0^\lambda\cup\Q_1^\lambda)\).
\end{defi}

\begin{lem}[{\cite[Lemma~1.6]{weigt2020variation}}]
\label{lem_boundaryofunion}
Let \(A,B\subset\mathbb{R}^d\) be measurable. 
Then
\[
\tbe{A\cup B}\subset(\tb A\setminus\tc B)\cup\tb B.
\]
In addition, the measure theoretic variant
\(
\mbe{A\cup B}\subset(\mb A\setminus\mc B)\cup\mb B
\)
and the mixed variant
\(
\tbe{A\cup B}\subset(\tb A\setminus\mc B)\cup\tb B
\)
hold.
\end{lem}

\begin{proof}
We have
\begin{align*}
\tbe{A\cup B}
&=
\tc{A\cup B}\cap\tc{\mathbb{R}^d\setminus(A\cup B)}
\\
&\subset
(\tc A\cup\tc B)\cap\tc{\mathbb{R}^d\setminus A}\cap\tc{\mathbb{R}^d\setminus B}
\\
&=
(\tc A\cap\tc{\mathbb{R}^d\setminus A}\cap\tc{\mathbb{R}^d\setminus B})
\cup
(\tc B\cap\tc{\mathbb{R}^d\setminus A}\cap\tc{\mathbb{R}^d\setminus B})
\\
&=
(\tb A\cap\tc{\mathbb{R}^d\setminus B})
\cup
(\tb B\cap\tc{\mathbb{R}^d\setminus A})
\\
&=
(\tb A\cap\tc{\mathbb{R}^d\setminus B}\cap\tc B)
\cup
(\tb A\cap\tc{\mathbb{R}^d\setminus B}\setminus\tc B)
\cup
(\tb B\cap\tc{\mathbb{R}^d\setminus A})
\\
&\subset
(\tb A\cap\tb B)
\cup
(\tb A\setminus\tc B)
\cup
\tb B
\\
&=
(\tb A\setminus\tc B)
\cup
\tb B
.
\end{align*}
The exact same proof with the topological boundary and closure replaced by the measure theoretic boundary and closure also proves the measure theoretic variant of the inclusion.
Since \(\mc E\subset\tc E\), the mixed variant directly follows from the topological variant.
\end{proof}

Using \cref{lem_boundaryofunion} we obtain
\begin{equation}
\label{eq_decomposition}
\begin{aligned}
\smb{\tb{
\bigcup\Q^\lambda
}
\setminus
\mc{
\{f\geq\lambda\}}
}
&\leq
\smb{
\tbb{
\bigcup\Q_0^\lambda
\cup
\bigcup\Q_1^\lambda
}
\setminus
\mc{\{f\geq\lambda\}}
}
\\
&\qquad+
\smb{
\tb{
\bigcup\Q_2^\lambda
}
\setminus
\tc{
\bigcup
(\Q_0^\lambda\cup\Q_1^\lambda)
}
}
.
\end{aligned}
\end{equation}
We bound the first summand in \cref{sec_covering}.
In \cref{sec_dyadic,sec_sparsemassabove,sec_organizingmass} we collect the necessary ingredients to deal with the second summand.
In \cref{sec_combination} we combine these results to a proof of \cref{theo_main}.
The results in \cref{sec_covering,sec_sparsemassabove} are generalizations of results in \cite{weigt2020variation,weigt2020variationdyadic}.

\subsection{Stars with large intersection}
\label{sec_covering}

The main results in this \lcnamecref{sec_covering} are \cref{pro_levelsets_finite_g,cla_largeboundaryinball}, two bounds on the perimeter of a union of \(\Lambda\)-stars.
They generalize \cite[Lemma~4.1 and Proposition~4.3]{weigt2020variation} from balls to stars.

For \(c\in(c_1,\ldots,c_d)\in\mathbb{R}^d\) denote \(c_{<d}=(c_1,\ldots,c_{d-1})\in\mathbb{R}^{d-1}\) and denote by \(B_{d-1}(x,r)\) a ball in \(\mathbb{R}^{d-1}\).

\begin{lem}
\label{lem_conelipschitz}
Let \(\Lambda\geq1\), \(r>0\), \(c\in\mathbb{R}^d\) and let \(U\) be a convex set with \(B(c,r)\subset U\subset\tc{B(c,\Lambda r)}\).
Then there is a \(2\Lambda\)-Lipschitz function \(f:B_{d-1}(c_{<d},r/2)\rightarrow(c_d,\infty)\) such that for any \(u\in B_{d-1}(c_{<d},r/2)\) and \(s\geq c_d\) with \(f(u)\neq s\) we have
\(
(u_1,\ldots,u_{d-1},s)\in U
\)
if and only if
\(
f(u)<s
.
\)
\end{lem}

\begin{figure}
\centering
\includegraphics{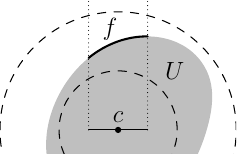}
\caption{The \(2\Lambda\)-Lipschitz function \(f\) in \cref{lem_conelipschitz} traces a segment of the boundary of \(U\).}
\end{figure}

\begin{proof}
By translation and scaling it suffices to consider the case \(c=0\) and \(r=1\).
For \(u\in\mathbb{R}^{d-1}\) and \(s\in\mathbb{R}\) we denote \((u;s)=(u_1,\ldots,u_{d-1},s)\in\mathbb{R}^d\).
For \(u\in B_{d-1}(0,1)\) define
\[
f(u)
=
\sup\{s\in\mathbb{R}:(u;s)\in U\}
>0
.
\]
Since any vertical line intersected with a convex set is an interval and
since \(B_{d-1}(0,1)\times\{0\}\subset U\) we can conclude that for any \(u\in B_{d-1}(0,1)\) and \(s\geq0\) with \(f(u)\neq s\) we have
\((u;s)\in U\) if and only if \(f(u)<s\).
It remains to show that \(f\) is \(2\Lambda\)-Lipschitz on \(B_{d-1}(0,1/2)\).
To that end let \(u,v\in B_{d-1}(0,1/2)\) with \(f(u)<f(v)\).
For \(x\in\mathbb{R}^d\) denote by
\(
U(x)
\)
the interior of the convex hull of
\(
B(0,1)\cup\{x\}
.
\)
Then for every \(0<\varepsilon<f(v)\) by convexity we have \(U(v;f(v)-\varepsilon)\subset U\) and thus
\[
U(v;f(v))
=
\bigcup_{0<\varepsilon<f(v)}
U(v;f(v)-\varepsilon)
\subset
U
.
\]
That means
\[
f(u)
\geq
\sup\{s:(u;s)\in U(v;f(v))\}
.
\]
Denote
\[
x
=
\f{
f(u)
}{
f(v)
}
(v;f(v))
.
\]
Then
\[
B\Bigl(x;
\f{
|x-(v;f(v))|
}{
|(v;f(v))|}
\Bigr)
\subset
U(v;f(v))
\subset
U
\]
which implies
\[
|u-x_{<d}|
=
|(u;f(u))-x|
\geq
\f{
|x-(v;f(v))|
}{
|(v;f(v))|
}
=
\f{
f(v)-f(u)
}{
f(v)
}
.
\]
Using
\[
|x_{<d}-v|
=
\f{
f(v)-f(u)
}{
f(v)
}
|v|
\]
we can conclude
\[
|u-v|
\geq
|u-x_{<d}|
-
|x_{<d}-v|
\geq
\f{
f(v)-f(u)
}{
f(v)
}
(1-|v|)
\geq
\f{
f(v)-f(u)
}{
2\Lambda
}
.
\]
This means \(f\) is \(2\Lambda\)-Lipschitz on \(B_{d-1}(0,1/2)\), finishing the proof.
\end{proof}

\begin{pro}[{\cite[Lemma~4.1]{weigt2020variation} for stars}]
\label{cla_largeboundaryinball}
Let \(K>0,\Lambda\geq1\), let \(B\) be a ball and let \(\Q\) be a set of \(\Lambda\)-stars \(Q\) with \(\rad Q\geq K\rad B\). 
Then
\[
\smb{B\cap\tb{\bigcup\Q}}
\lesssim_d
(K^{-1}+1)\Lambda^d\sm{\tb B}
.
\]
\end{pro}

\begin{proof}
Note, that \(\tb{\bigcup\Q}\) is a closed set and thus \(\H^{d-1}\)-measurable.
Recall that for \(x=(x_1,\ldots,x_d)\in\mathbb{R}^d\) we denote \(x_{<d}=(x_1,\ldots,x_{d-1})\).
By translation and scaling it suffices to consider the case \(B=B(0,1)\), and we
first consider the case \(K\geq 5\).
It suffices to consider those \(Q\in\Q\) which intersect \(B(0,1)\) because all other \(Q\) do not contribute to \(B(0,1)\cap\tb{\bigcup\Q}\).
Since \(Q\subset\Lambda\tc{\bll Q}\) this means the center \(c\) of \(\bll Q\) satisfies \(|c|<\Lambda\rad Q+1\).
For a unit vector \(e\in\Sph^{d-1}\) denote by \(\Q_e\) the set of those \(Q\in\Q\) whose centers \(c\) have an angle with \(e\) of at most \(1/(4\Lambda)\).
First consider the case \(e=(0,\ldots,0,-1)\).
For every \(Q\in\Q_e\) we have
\[
\f{
|c_{<d}|
}{
|c|
}
\leq
\sin\Bigl(
\f1{4\Lambda}
\Bigr)
\leq
\f1{4\Lambda}
\]
and therefore
\[
\f{
\rad Q
}2
-
|c_{<d}|
\geq
\f{
\rad Q
}2
-
\f{|c|}{4\Lambda}
>
\f{
\rad Q
}4
-
\f1{4\Lambda}
\geq
\f54
-
\f1{4\Lambda}
\geq 1
,
\]
which means \(B_{d-1}(0,1)\subset B_{d-1}(c_{<d},\rad Q/2)\).
For \(x\in B(0,1)\) apply \cref{lem_conelipschitz} with \(U=\conv(\bll Q\cup\{x\})\) and \(B(c,r)=\bll Q\) and denote by \(f_{Q,x}:B_{d-1}(c_{<d},\rad Q/2)\rightarrow(c_d,\infty)\) the resulting \(2\Lambda\)-Lipschitz function.
Unless \(\Q_e\) is empty, the assignment
\[
f_e(u)
\seq
\sup_{Q\in\Q_e,\ x\in Q,\ x_d>-1}
f_{Q,x}(u)
\]
defines a function \(f_e:B_{d-1}(0,1)\rightarrow\mathbb{R}\) which as the supremum of \(2\Lambda\)-Lipschitz functions is itself \(2\Lambda\)-Lipschitz.
It suffices to consider the case that for every \(Q\in\Q\) the center \(c\) of \(\bll Q\) satisfies \(|c|>2\) because since \(K\geq5\) otherwise \(B(0,1)\subset\bll Q\subset Q\) and thus \(\tb{\bigcup\Q}\cap B(0,1)=\emptyset\).
Thus we have \(c_d\leq-|c|\cos(1/(4\Lambda))<-1\).

Let \(x\in B_{d-1}(0,1)\times(-1,\infty)\) with \(x_d<f_e(x_{<d})\).
Then there exist \(Q\in\Q_e\) with \(x_d>-1>c_d\) and \(y\in Q\) with \(y_d>-1\) and \(x_d<f_{Q,y}(x_{<d})\).
By \cref{lem_conelipschitz} we can conclude
\[
x\in \conv(\bll Q\cup\{y\})\subset Q\subset\bigcup\Q_e
.
\]
Conversely, let \(x\in B_{d-1}(0,1)\times(-1,\infty)\) with \(x_d>f_e(x_{<d})\) and let \(Q\in\Q_e\).
Then \(x_d>-1>c_d\) and since \(x\in\conv(\bll Q\cup\{x\})\) we have by \cref{lem_conelipschitz} that \(x_d\leq f_{Q,x}(x_{<d})\).
By definition of \(f_e\) this requires \(x\not\in Q\).
That means \(x\not\in\bigcup\Q_e\).
Since \(f_e\) is Lipschitz on \(B_{d-1}(0,1)\) and \(B_{d-1}(0,1)\times(-1,\infty)\) is open, we can conclude that for all \(x\in B_{d-1}\times(-1,\infty)\) we have \(f_e(x_{<d})=x_d\) if and only if \(x\in\tb{\bigcup\Q_e}\).
Because the graph of a Lipschitz function satisfies a surface measure bound we can conclude
\begin{align*}
\smb{B(0,1)\cap\tb{\bigcup\Q_e}}
&\leq
\smb{
	\bigl[B_{d-1}(0,1)\times(-1,\infty)\bigr]
	\cap
	\tb{\bigcup\Q_e}
}
\\
&=
\sm{\{(u_1,\ldots,u_{d-1},s):u\in B_{d-1}(0,1),\ f_e(u)=s\}}
\\
&\leq
\sm{B_{d-1}(0,1)}
\sqrt{1+(2\Lambda)^2}
\\
&\lesssim_d
\Lambda
.
\end{align*}
By rotation the previous estimate holds for any unit vector \(e\in\Sph^{d-1}\).
There is a grid \(G\subset\Sph^{d-1}\) with cardinality \(\# G\lesssim_d\Lambda^{d-1}\) such that for every \(\nu\in\Sph^{d-1}\) there is an \(e\in G\) which has an angle less than \(1/(4\Lambda)\) with \(\nu\).
We can conclude
\begin{align*}
\smb{B(0,1)\cap\tb{\bigcup\Q}}
&=
\smb{B(0,1)\cap\tb{\bigcup_{e\in G}\bigcup\Q_e}}
\leq
\sum_{e\in G}
\smb{B(0,1)\cap\tb{\bigcup\Q_e}}
\\
&\lesssim_d
\# G
\Lambda
\lesssim_d
\Lambda^d
,
\end{align*}
finishing the proof for \(K\geq 5\).

For \(K<5\) take a covering \(\B\) of \(B\) by a dimensional constant times \((5/K)^d\) many balls \(B'\) with \(r(B')=(K/5)r(B)\).
That means for any \(Q\in\Q\) we have \(\rad Q\geq Kr(B)=5r(B')\)
and by the above case we can conclude
\begin{align*}
\smb{\tb{\bigcup\Q}\cap B}
&\leq
\sum_{B'\in\B}
\smb{\tb{\bigcup\Q}\cap B'}
\lesssim_d
\sum_{B'\in\B}
\Lambda^d\sm{\tb{B'}}
\\
&\lesssim_d
K^{-d}
\Lambda^d
K^{d-1}
\sm{\tb B}
=
K^{-1}
\Lambda^d
\sm{\tb B}
.
\qedhere
\end{align*}
\end{proof}

\begin{rem}
In \cite{weigt2020variation} the original version of \cref{cla_largeboundaryinball}, \cite[Lemma~4.1]{weigt2020variation}, has the unnecessarily large factor \(K^{-d}\) instead of \(K^{-1}\) due to an oversight.
\end{rem}

For \(t\geq0\) denote
\[
Q(t)
=
\bigl\{
y\in Q:\dist(y,\mathbb{R}^d\setminus Q)> t
\bigr\}
.
\]

\begin{lem}
\label{lem_volumeinnerstar}
Let \(Q\) be a \(\Lambda\)-star.
Then for any \(t\geq0\) we have
\[
\lm{Q\setminus Q(t)}
\leq
\f{
dt
}{
\rad Q
}
\lm Q
.
\]
\end{lem}
\begin{proof}
By translation consider the case that \(\bll Q\) is centered in the origin.
Let \(e\in\Sph^{d-1}\) and let \(s\geq0\) with \(se\in Q\).
Then for all \(0\leq u\leq1\) we have
\[
B((1-u)se,u\rad Q)\subset\bcone{\bll Q}{se}\subset Q
\]
and thus if \(u> t/\rad Q\) then
\[
(1-u)se\in B((1-u)se,u\rad Q-t)\subset Q(t)
.
\]
With \(s_e=\sup\{s:se\in Q\}\) this means
\[
\{se:0\leq s<(1-t/\rad Q)s_e\}\subset Q(t)
.
\]
By Fubini we can conclude
\begin{align*}
\lm{Q(t)}
&=
\int_{\Sph^{d-1}}
\int_{\{s:se\in Q(t)\}}
s^{d-1}
\intd s
\intd\nu
\\
&\geq
\int_{\Sph^{d-1}}
\int_0^{(1-t/\rad Q)s_e}
s^{d-1}
\intd s
\intd\nu
\\
&=
(1-t/\rad Q)^d
\int_{\Sph^{d-1}}
\int_0^{s_e}
s^{d-1}
\intd s
\intd\nu
\\
&=
(1-t/\rad Q)^d
\lm Q
\\
&\geq
(1-dt/\rad Q)
\lm Q
.
\qedhere
\end{align*}
\end{proof}

\begin{lem}
\label{lem_starinnerstar}
Let \(Q\) be a \(\Lambda\)-star and \(t<\rad Q\).
Then \(Q(t)\) is a \(\f{\rad Q}{\rad Q-t}\Lambda\)-star.
\end{lem}

\begin{proof}
Define \(B(c,r)=\bll Q\) and let \(x\in Q(t)\).
Then there exists an \(\varepsilon>0\) such that \(B(x,t+\varepsilon)\subset Q\) and thus
\[
A
\seq
\bigcup_{y\in B(x,t+\varepsilon)}
\bcone{B(c,r)}y
\subset
Q
.
\]
Since \(B(x,t+\varepsilon)\) is convex we have \(A=\conv(B(c,r)\cup B(x,t+\varepsilon))\) which implies \(B(c,r-t)\subset A(t)\), \(x\in A(t)\) and that \(A\) and hence \(A(t)\) are convex.
We can conclude
\[
\bconeb{\f{\rad Q-t}{\rad Q}\bll Q}x
=
\cone cx{r-t}
\subset
A(t)
\subset
Q(t)
,
\]
finishing the proof.
\end{proof}

\begin{lem}[{\cite[Lemma~2.3]{weigt2020variation} for stars}]
\label{lem_perimeterinconeinterior}
Let \(0\leq\lambda\leq1\), \(\Lambda\geq1\), and let \(Q\subset\mathbb{R}^d\) be a \(\Lambda\)-star.
Let \(E\subset\mathbb{R}^d\) with \(\lm{E\cap Q}\geq\lambda\lm Q\) and let \(x\in\tb Q\setminus\mc E\).
Then there exists a \(0<t\leq\diam(Q)\) such that
\[
\sm{
\mb E\cap Q(\lambda t/(4d\Lambda))\cap B(x,t)
}
\gtrsim_{d,\Lambda}
\lambda^{\f{d-1}d}
\sm{\tb{
B(x,t)
}}
.
\]
\end{lem}

\begin{figure}
\centering
\includegraphics{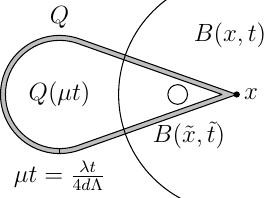}
\caption{The ball \(B(\tilde x,\tilde t)\) away from the boundary of \(Q\) in \cref{lem_perimeterinconeinterior}.}
\end{figure}

\begin{proof}
Denote \(B(c,r)=\bll Q\) and abbreviate \(\mu=\lambda/(4d\Lambda)\) and note that since \(x\in\tb Q\) we have \(|x-c|\geq r\).
For \(0<t\leq2|x-c|\) define
\[
u_t=t/(2|x-c|)
,\qquad
r_t
=
u_tr/2
,\qquad
y_t=u_tc+(1-u_t)x
.
\]
Then \(0<u_t\leq 1\) and by \cref{lem_smallclosure}\cref{it_interiorofboundaryconeinstar} we have
\[
B(y_t,2r_t)
\subset
\ti\conv(B(c,r)\cup\{x\})
\subset
Q
.
\]
Since
\[
2r_t-r_t
=
r_t
=
\f{rt}{4|x-c|}
\geq
\f t{4\Lambda}
\geq
\mu t
\]
we can conclude
\(
B(y_t,r_t)
\subset
Q(\mu t)
.
\)
Moreover,
\[
|y_t-x|
+
r_t
=
u_t|x-c|
+
\f{
u_tr
}2
=
t\Bigl(
\f12
+
\f r{
4|x-c|
}
\Bigr)
\leq
t
\]
which means
\(
B(y_t,r_t)
\subset
B(x,t)
.
\)
Since \(x\not\in\mc E\) we can conclude
\[
\f{
\lm{E\cap B(y_t,r_t)}
}{
\lm{B(y_t,r_t)}
}
\leq
(4\Lambda)^d
\f{
\lm{E\cap B(x,t)}
}{
\lm{B(x,t)}
}
\rightarrow0
\]
for \(t\rightarrow0\).

We first consider the case
\[
\lm{E\cap B(c,r/2)}
\geq
\lm{B(c,r/2)}
/2
.
\]
Since \(y_{2|x-c|}=c\) and \(r_{2|x-c|}=r/2\)
and since
\(t\mapsto\lm{B(y_t,r_t)}\)
and
\(t\mapsto\lm{E\cap B(y_t,r_t)}\)
are continuous maps this means that there exists a \(0<t\leq2|x-c|\) such that
\[
\lm{E\cap B(y_t,r_t)}
=
\lm{B(y_t,r_t)}
/2
.
\]
Note that a ball \(B\) is a \(1\)-John domain with
\[
\lm B^{d-1}
\sim_d
r(B)^{d(d-1)}
\sim_d
\sm{\tb B}^d
.
\]
Thus by \cref{lem_isoperimetricboundedaway} we can conclude
\[
\sm{\mb E\cap Q(\mu t)\cap B(x,t)}
\geq
\sm{\mb E\cap B(y_t,r_t)}
\gtrsim_d
\sm{\tb{B(y_t,r_t)}}
\geq
\f{
\sm{\tb{B(x,t)}}
}{
(4\Lambda)^{d-1}
}
\]
finishing the proof in this case.

It remains to consider the case
\[
\lm{B(c,r/2)\setminus E}
>
\lm{B(c,r/2)}
/2
.
\]
Since
\[
\mu\diam(Q)
\leq
2\Lambda r\mu
=
\f{
\lambda r
}{
2d
}
\leq
\f r2
\]
we have
\(
B(c,r/2)
\subset
Q(\mu\diam(Q))
\)
and thus
\begin{equation}
\label{eq_QmudiamQEleq}
\lm{Q(\mu\diam(Q))\setminus E}
\geq
\lm{B(c,r/2)\setminus E}
>
\f{
\lm{B(c,r/2)}
}
2
=
\f{
\lm{B(c,\Lambda r)}
}{
2^{d+1}\Lambda^d
}
\geq
\f{
\lm{Q(\mu\diam(Q))}
}{
2^{d+1}\Lambda^d
}
\end{equation}
By \cref{lem_volumeinnerstar} we have
\[
\lm{Q\setminus Q(\mu\diam(Q))}
\leq
\f{
d\lambda\diam(Q)
}{
4d\Lambda r
}
\lm Q
\leq
\f\lambda2
\lm Q
\]
and thus
\begin{equation}
\label{eq_QmudiamQEgeq}
\lm{Q(\mu\diam(Q))\cap E}
\geq
\lambda\lm Q
-
\lm{Q\setminus Q(\mu\diam(Q))}
\geq
\f\lambda2
\lm Q
\geq
\f\lambda2
\lm{Q(\mu\diam(Q))}
.
\end{equation}
Since \(\mu\diam(Q)\leq\rad Q/d\) by \cref{lem_Johnvstar}\cref{it_starisJohn} and \cref{lem_starinnerstar} the set \(Q(\mu\diam(Q))\) is a \(d\Lambda/(d-1)\)-John domain and thus by \cref{eq_QmudiamQEleq,eq_QmudiamQEgeq,lem_isoperimetricboundedaway} we can conclude
\begin{align*}
\sm{B(x,\diam(Q))\cap Q(\mu\diam(Q))\cap\mb E}
&\geq
\sm{Q(\mu\diam(Q))\cap\mb E}
\\
&\gtrsim_{d,\Lambda}
\min\{2^{-(d+1)}\Lambda^{-d},\lambda/2\}^{\f{d-1}d}
\lm{Q(\mu\diam(Q))}^{\f{d-1}d}
\\
&\gtrsim_{d,\Lambda}
\lambda^{\f{d-1}d}
\sm{\tb{B(x,\diam(Q))}}
.
\qedhere
\end{align*}
\end{proof}

For a set \(\Q\) of subsets of \(\mathbb{R}^n\) abbreviate \(\ti\Q=\{\ti Q:Q\in\Q\}\).

\begin{pro}[{\cite[Proposition~4.3]{weigt2020variation} for stars}]
\label{pro_levelsets_finite_g}
Let \(\lambda\in(0,1)\).
Let \(E\subset\mathbb{R}^d\) be a set of locally finite perimeter and let \(\Q\) be a finite set of \(\Lambda\)-stars such that for each \(Q\in\Q\) we have \(\lm{E\cap Q}\geq\lambda\lm Q\).
Then
\[
\smb{
\tb{\bigcup\Q}
\setminus\mc E
}
\lesssim_{d,\Lambda}
(1-\log\lambda)
\lambda^{-\f{d-1}d}
\smb{
\mb E \cap
\bigcup\ti\Q
}
.
\]
\end{pro}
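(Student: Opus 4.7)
The plan is to adapt the proof of \cite[Proposition~4.4]{weigt2020variation}, where the analogous statement was shown for balls; the geometric ingredients needed to translate that argument to the cube setting have all been collected above, with \cref{cla_largeboundaryinball} as the crucial replacement for its ball analogue.

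For each $x \in \tb{\bigcup\Q} \setminus \mc E$ I would select a cube $Q \in \Q$ with $x \in \partial Q$ and pick a radius $r_x > 0$ as the smallest scale at which the density $\lm{E \cap B(x, r)}/\lm{B(x, r)}$ reaches a small dimensional threshold $\alpha$, chosen very small compared to $\varepsilon$. Such an $r_x$ exists because the density tends to $0$ as $r \to 0$ (since $x \notin \mc E$), while at $r \sim \sle(Q)$ the ball absorbs $Q$ whose $E$-density is at least $\varepsilon$, forcing a crossing. By construction, $B(x, r_x)$ has $E$-density bounded away from both $0$ and $1$, so the relative isoperimetric inequality on balls yields
\[
\smb{\mb E \cap B(x, r_x)} \gtrsim_\varepsilon r_x^{d-1}.
\]

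The matching upper bound $\smb{\tb{\bigcup \Q} \cap B(x, r_x)} \lesssim_\varepsilon r_x^{d-1}$ I would obtain by splitting $\Q$ according to the scale of $\sle(Q)$ relative to $r_x$. Cubes of side length at least $r_x$ contribute at most $\sm{\partial B(x,r_x)} \sim r_x^{d-1}$ by \cref{cla_largeboundaryinball} applied with $K$ a dimensional constant. Cubes of side length less than $r_x$ whose boundary meets $B(x, r_x)$ must lie in a bounded dilate $B(x, C_d r_x)$; the density condition on each such cube forces the total Lebesgue measure of the union of small cubes inside this dilate to be a multiple of $\alpha/\varepsilon$ times $\lm{B(x, r_x)}$, and a further argument using \cref{lem_cubeangle} to group their faces by approximate normal direction turns this volume estimate into a perimeter bound of the desired order $r_x^{d-1}$, provided $\alpha$ is chosen sufficiently small. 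A Vitali covering extracted from $\{B(x, r_x)\}_x$, whose $5$-fold enlargements cover $\tb{\bigcup\Q}\setminus\mc E$, then allows one to sum the two local bounds over a disjoint subfamily; the disjointness converts the resulting collection of lower bounds into the single integral bound $\smb{\mb E \cap \bigcup\Q}$.

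The main obstacle is precisely the small-cube half of the boundary bound. In the ball case of \cite{weigt2020variation} spherical symmetry makes this part relatively direct; for cubes one has to combine the density condition with the geometry of faces, and the angle control from \cref{lem_cubeangle} is what permits the passage from a bound on the total Lebesgue measure of the small cubes to a bound on the perimeter of their union inside $B(x, r_x)$. Calibrating $\alpha$ against $\varepsilon$ carefully enough that this translation does not swallow the factor $r_x^{d-1}$ is the delicate step; once it is in place the rest of the argument is bookkeeping.
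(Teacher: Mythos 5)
Your overall plan---stopping-time radii, a relative isoperimetric lower bound, a local upper bound on \(\tb{\bigcup\Q}\) with \cref{cla_largeboundaryinball} handling the large cubes, then a Vitali covering---is indeed the skeleton of the ball-case argument that the paper imports from \cite[Proposition~4.4]{weigt2020variation}, and the large-cube half is fine. The genuine gap is the small-cube half of your local upper bound. A bound on the Lebesgue measure of the union of the small cubes cannot be upgraded to a bound on the \(\mathcal H^{d-1}\)-measure of their contribution to \(\tb{\bigcup\Q}\): perimeter is not controlled by volume once the cubes are small (the surface-to-volume ratio is \(\sim1/\sle(Q)\)), and the normal-direction/Lipschitz-graph mechanism behind \cref{lem_cubeangle} only works when the cubes are large relative to the ball, which is exactly why \cref{cla_largeboundaryinball} carries the factor \(K^{-d}+1\). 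In fact the intermediate estimate you aim for, \(\sm{\tb{\bigcup\Q}\cap B(x,r_x)}\lesssim_\varepsilon r_x^{d-1}\), is false even after removing \(\mc E\): inside \(B(x,r_x)\), on the side of \(\partial Q_x\) away from \(Q_x\), place \(N\) pairwise disjoint cubes of side \(s\ll r_x\), each containing a droplet of \(E\) of measure just above \(\varepsilon s^d\) situated well inside the cube. Their total volume \(Ns^d\) can be kept below \((\alpha/\varepsilon)\lm{B(x,r_x)}\), consistent with your stopping time, while their exposed boundary has measure \(\sim Ns^{d-1}\), arbitrarily large as \(s\to0\), and it lies outside \(\mc E\). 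The proposition survives such configurations only because each small cube contains perimeter of \(E\) of order \(s^{d-1}\) by the isoperimetric inequality applied at the cube's own scale; small cubes must be charged at their own scale, not at scale \(r_x\). This multi-scale charging is precisely the content of the ball-case argument (Lemma~4.3 and Proposition~4.4 of \cite{weigt2020variation}) which the paper takes over verbatim, and your sketch replaces it by a step that cannot work.

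A secondary issue: your lower bound \(\sm{\mb E\cap B(x,r_x)}\gtrsim_\varepsilon r_x^{d-1}\) charges perimeter that may lie entirely outside \(\bigcup\Q\), since the mass of \(E\) triggering the stopping time need not lie in \(Q_x\) and a fixed fraction of \(B(x,r_x)\) sits outside \(Q_x\); the right-hand side of \cref{pro_levelsets_finite_g} only sees \(\mb E\cap\bigcup\Q\). This part is repairable---run the stopping time on the density of \(E\) in \(B(x,r)\cap Q_x\), a convex set of bounded eccentricity because \(x\in\partial Q_x\), and apply the relative isoperimetric inequality there---but it must be addressed. Note finally that the paper's own proof is not a from-scratch covering argument: it cites the ball-case proof and only verifies the cube-specific ingredients, with \cref{cla_largeboundaryinball} substituted for its ball analogue; that lemma is the only genuinely new input.
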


\begin{proof}
Since \(\Q\) is finite for every \(x\in\tb{\bigcup\Q}\setminus\mc E\) there is a \(Q\in\Q\) with \(x\in\tb Q\setminus\mc E\).
Let \(\B\) be the set of all balls from \cref{lem_perimeterinconeinterior} applied to every such \(x\) and \(Q\).
For each \(n\in\mathbb{Z}\) apply the Vitali covering lemma to \(\{B\in\B:2^n<r(B)\leq2^{n+1}\}\) and denote by \(\B_n\) the resulting set, i.e.\ the balls in \(\B_n\) are disjoint and for each \(B\in\B\) with \(2^n<r(B)\leq 2^{n+1}\) we have \(B\subset\bigcup\{5C:C\in\B_n\}\).
Let
\[
\Q_n=\{Q\in\Q:2^{n-1}<\diam(Q)\leq2^n\}
\]
and \(\Q_{\geq n}=\bigcup_{k\geq n}\Q_n\).
Then
\begin{equation}
\label{eq_applyperimetercover}
\tb{\bigcup\Q}
\cap
\tb{\bigcup\Q_n}
\setminus\mc E
\subset
\bigcup_{k\leq n}
\bigcup_{C\in\B_k}
5C
.
\end{equation}
Using \cref{eq_applyperimetercover} and that \(\Q\) is finite we obtain
\begin{align*}
\tb{\bigcup\Q}
\setminus\mc E
&=
\bigcup_{n\in\mathbb{Z}}
\tb{\bigcup\Q}
\cap
\tb{\bigcup\Q_n}
\setminus\mc E
\\
&=
\bigcup_{n\in\mathbb{Z}}
\bigcup_{k\leq n}
\bigcup_{C\in\B_k}
5C
\cap
\tb{\bigcup\Q}
\cap
\tb{\bigcup\Q_n}
\setminus\mc E
\\
&=
\bigcup_{k\in\mathbb{Z}}
\bigcup_{C\in\B_k}
\bigcup_{n\geq k}
5C
\cap
\tb{\bigcup\Q}
\cap
\tb{\bigcup\Q_n}
\setminus\mc E
\\
&=
\bigcup_{k\in\mathbb{Z}}
\bigcup_{C\in\B_k}
5C
\cap
\tb{\bigcup\Q}
\cap
\tb{\bigcup\Q_{\geq k}}
\setminus\mc E
\\
&\subset
\bigcup_{k\in\mathbb{Z}}
\bigcup_{C\in\B_k}
5C
\cap
\tb{\bigcup\Q_{\geq k}}
.
\end{align*}
By \cref{cla_largeboundaryinball} we can conclude
\begin{equation}
\label{eq_boundarybyballs}
\smb{
\tb{\bigcup\Q}
\setminus\mc E
}
\leq
\sum_{k\in\mathbb{Z}}
\sum_{C\in\B_k}
\sm{
5C\cap
\tb{\bigcup\Q_{\geq k}}
}
\lesssim_{d,\Lambda}
\sum_{k\in\mathbb{Z}}
\sum_{C\in\B_k}
\sm{\tb C}
\end{equation}
By \cref{lem_perimeterinconeinterior} and since every ball \(C\in\B_k\) satisfies \(r(C)>2^k\) we have
\begin{align}
\label{eq_ballfromboundarycover}
\sm{\tb C}
\lesssim_{d,\Lambda}\lambda^{-\f{d-1}d}
\max_{Q\in\Q}
\sm{
\mb E\cap Q(\lambda 2^k/(4d\Lambda))\cap C
}
\end{align}
Abbreviate
\[
A_k
=
\bigcup\{Q(\lambda 2^k/(4d\Lambda)):Q\in\Q\}
\setminus
\bigl(\bigcup\Q\bigr)(2^{k+1})
.
\]
Since the balls \(C\in\B_k\) are pairwise disjoint, centered on the boundary of \(\bigcup\Q\) and satisfy \(r(C)\leq2^{k+1}\) we have
\begin{equation}
\label{eq_ballssamescale}
\sum_{C\in\B_k}
\max_{Q\in\Q}
\sm{
\mb E\cap Q(\lambda 2^k/(4d\Lambda))\cap C
}
\leq
\sm{\mb E\cap A_k}
.
\end{equation}
For any \(y\in\mathbb{R}^d\), \(k\in\mathbb{Z}\) we have
\begin{align*}
y&\in A_k
&&\implies
&&y\in\bigl(\bigcup\Q\bigr)(\lambda2^k/(4d\Lambda))\setminus\bigl(\bigcup\Q\bigr)(2^{k+1})
\\
&&&\iff
&&
\lambda 2^k/(4d\Lambda)<\dist(y,\mathbb{R}^d\setminus\bigcup\Q)\leq2^{k+1}
\\
&&&\iff
&&
k+\log_2(\lambda/(4d\Lambda))<\log_2(\dist(y,\mathbb{R}^d\setminus\bigcup\Q))\leq k+1
\\
&&&\iff
&&
0\leq k+1-\log_2(\dist(y,\mathbb{R}^d\setminus\bigcup\Q))<1-\log_2(\lambda/(4d\Lambda))
.
\end{align*}
That means for any \(y\in\mathbb{R}^d\) there are at most
\[
1-\log_2(\lambda/(4d\Lambda))+1=\log_2(16d\Lambda/\lambda)
\]
many different \(k\in\mathbb{Z}\) with \(y\in A_k\).
Together with \(A_k\subset\bigcup\ti\Q\) we obtain
\begin{align}
\notag
\sum_{k\in\mathbb{Z}}
\sm{\mb E\cap A_k}
&=
\int_{\mb E}
\sum_{k\in\mathbb{Z}}
\ind{A_k}(y)
\intd\sm y
\\
\notag
&\leq
\int_{\mb E\cap\bigcup\ti\Q}
\log_2(16d\Lambda/\lambda)
\intd\sm y
\\
\notag
&=
\log_2(16d\Lambda/\lambda)
\smb{\mb E\cap\bigcup\ti\Q}
\\
\label{eq_boundedk}
&=
\bigl[
\log_2(16d\Lambda)-\log_2(\lambda)
\bigr]
\smb{\mb E\cap\bigcup\ti\Q}
.
\end{align}
Chaining the previous inequalities \cref{eq_boundarybyballs,eq_ballfromboundarycover,eq_ballssamescale,eq_boundedk} finishes the proof.
\end{proof}

Observe the extra factor \((1-\log\lambda)\) in \cref{pro_levelsets_finite_g} in comparison to \cref{lem_isoperimetricboundedaway}.
In \cite[Proposition~5.3]{weigt2020variation} we managed to remove this \(\log\) factor in the case of balls with additional effort.
For our purposes here the rate in \(\lambda\) in \cref{pro_levelsets_finite_g} is particularly irrelevant since we will apply it only for a few fixed values \(\lambda>0\) that depend only on the dimension \(d\).

As a corollary of \cref{pro_levelsets_finite_g} we can bound the first summand in \cref{eq_decomposition}.
Recall \cref{defi_qli}.

\begin{cor}
\label{cor_highdensitycubes}
Let \(L\geq2\), \(\Lambda\geq1\), let \(\Q\) be a finite set of \(\Lambda\)-stars and let \(f\in L^1(\mathbb{R}^d)\).
Then
\[
\smb{
\tbb{
\bigcup\Q_0^\lambda
\cup
\bigcup\Q_1^\lambda
}
\setminus
\mc{\{f\geq\lambda\}}
}
\lesssim_{d,L,\Lambda}
\smb{
\mb{\{f\geq\lambda\}}
\cap
\bigcup\ti{\Q^\lambda}
}
.
\]
\end{cor}
\begin{proof}
By \cref{lem_boundaryofunion} we have
\[
\tbb{
\bigcup\Q_0^\lambda
\cup
\bigcup\Q_1^\lambda
}
\subset
\tb{
\bigcup\Q_1^\lambda
}
\setminus
\mc{
\bigcup\Q_0^\lambda
}
\cup
\tb{
\bigcup\Q_0^\lambda
}
,
\]
by \cref{pro_levelsets_finite_g} we have
\[
\smb{\tb{\bigcup\Q_0^\lambda}\setminus\mc{\{f\geq\lambda\}}}
\lesssim_{d,L,\Lambda}
\smb{
\mb{\{f\geq\lambda\}}
\cap
\bigcup\ti{\Q_0^\lambda}
}
\]
and by \cref{pro_levelsets_finite_g,lem_boundaryofunion} we have
\begin{align*}
&\smb{
\tb{\bigcup\Q_1^\lambda}
\setminus
\mc{\bigcup\Q_0^\lambda\cup\{f\geq\lambda\}}
}
\lesssim_{d,\Lambda}
\smb{\mbb{\bigcup\Q_0^\lambda\cup\{f\geq\lambda\}}
\cap
\bigcup\ti{\Q_1^\lambda}
}
\\
&\qquad\leq
\smb{\mb{\bigcup\Q_0^\lambda}
\setminus
\mc{\{f\geq\lambda\}}
\cap
\bigcup\ti{\Q_1^\lambda}
}
+
\smb{
\mb{\{f\geq\lambda\}}
\cap
\bigcup\ti{\Q_1^\lambda}
}
.
\end{align*}
We use that the measure theoretic boundary is contained in the topological boundary and combine the three previous displays to finish the proof.
\end{proof}

We integrate \cref{eq_decomposition,cor_highdensitycubes} over \(\lambda\in\mathbb{R}\) and obtain
\begin{equation}
\label{eq_splitintolowandhighdensity}
\begin{aligned}
\int_{-\infty}^\infty
\smb{\tb{
\bigcup\Q^\lambda
}
\setminus
\mc{
\{f\geq\lambda\}}
}
\intd\lambda
&\leq
\int_{-\infty}^\infty
\smb{
\tb{
\bigcup\Q_2^\lambda
}
\setminus
\tc{
\bigcup
(\Q_0^\lambda\cup\Q_1^\lambda)
}
}
\intd\lambda
\\
&\qquad
+
C_{d,L,\Lambda}
\int_{-\infty}^\infty
\smb{\mb{\{f\geq\lambda\}}
\cap
\bigcup\ti{\Q^\lambda}
}
\intd\lambda
.
\end{aligned}
\end{equation}

\subsection{Reducing to almost dyadically structured stars}
\label{sec_dyadic}

The purpose of this \lcnamecref{sec_dyadic} is to represent the first term on the right hand side of \cref{eq_splitintolowandhighdensity} by almost dyadically structured stars.

\begin{defi}
\label{defi_maximalsets}
For a set \(\Q\) of measurable subsets of \(\mathbb{R}^d\) we define the \emph{maximal} sets in \(\Q\) by
\[
\ms\Q
=
\{
Q\in\Q:
\forall P\in\Q\ 
\lnot
Q\msubsetneq P
\}
.
\]
\end{defi}

\begin{lem}
\label{lem_maxcover}
Let \(\Q\) be a finite set of measurable subsets of \(\mathbb{R}^d\).
Then for any \(Q\in\Q\) there is a \(P\in\ms\Q\) with \(Q\msubset P\).
\end{lem}

\begin{proof}
Let \(Q_0\in\Q\).
Any sequence of sets \(Q_0,Q_1,\ldots\in\Q\) with \(Q_k\msubsetneq Q_{k+1}\) is finite.
Let \(Q_0,\ldots,Q_n\) be a longest such sequence.
Then there is no \(Q\in\Q\) with \(Q_n\msubsetneq Q\) which means \(\Q_n\in\ms\Q\) and
\(
Q_0\msubset Q_n
,
\)
finishing the proof.
\end{proof}

For \(n\in\mathbb{Z}\) we say that a star \(Q\) is of \emph{scale \(n\)} if \(2^{n-1}<\rad Q\leq2^n\).

\begin{pro}
\label{pro_todyadic}
Let \(L,\Lambda\geq1\), let \(\Q\) be a finite set of \(\Lambda\)-stars and let \(f\in L^1(\mathbb{R}^d)\).
Then \(\Q\) has a subset \(\S\subset\Q\) with the following properties:
\begin{enumerate}
\item
\label{it_dyadicstarsonlymax}
For any \(Q\in\S\) there is no \(P\in\Q\) with \(Q\msubsetneq P\) and \(f_P\geq f_Q\).
\item 
\label{it_starsalmostdyadic}
Let \(Q,R\in\S\) be distinct with \(\rad R\leq\rad Q\).
Then
\[
\lm{R\cap Q}<2^{-1}
\min\bigl\{
\lm{\bll Q},\lm{\bll R}
\bigr\}
,
\]
or \(R\) is both of strictly smaller scale than \(Q\)
and \(f_R>f_Q\).
\item
\label{eq_significantmass}
For \(Q\in\Q\) let
\[
\lambda_Q
=
\inf\bigl\{
\lambda:\lm{Q\cap\{f\geq\lambda\}}<(4L)^{-1}\lm Q
\bigr\}
.
\]
Then for all \(Q\in\S\) we have \(f_Q\geq\lambda_Q\) and
\[
\int_{-\infty}^\infty
\smb{
\tb{
\bigcup\Q_2^\lambda
}
\setminus
\tc{
\bigcup
(\Q_0^\lambda\cup\Q_1^\lambda)
}
}
\intd\lambda
\lesssim_{d,\Lambda}
\sum_{Q\in\S}
(f_Q-\lambda_Q)\sm{\tb Q}
.
\]
\end{enumerate}
\end{pro}

\begin{proof}
Let
\[
\tilde\Q
=
\{Q\in\Q:\forall P\in\Q\ (\lnot Q\msubsetneq P)\tx{ or }f_P< f_Q\}
.
\]
We construct \(\S\) using the following algorithm:
\begin{alg*}
Initiate \(\R=\tilde\Q\) and \(\S=\emptyset\).
We iterate the following procedure:

If \(\R\) is empty then output \(\S\) and stop.
If \(\R\) is nonempty let \(n\) be the largest scale of a star in \(\R\),
take a star \(S\in\R\) of scale \(n\) which attains
\[
\max\{f_Q:Q\in\R,\ \rad Q>2^{n-1}\}
\]
and add it to \(\S\).
Then remove all stars \(Q\) with
\(f_Q\leq f_S\)
and
\[
\lm{
Q\cap S
}
\geq
2^{-1}\min\{\lm{\bll Q},\lm{\bll S}\}
\]
from \(\R\) and repeat.

Finally, remove those sets \(Q\) from \(\S\) with \(f_Q<\lambda_Q\).
\end{alg*}

The definition of \(\tilde\Q\) and \(\S\subset\tilde\Q\) imply \cref{it_dyadicstarsonlymax}.
Next, we prove \cref{it_starsalmostdyadic}.
In each iteration in the above loop we remove at least the star \(S\) from \(\R\) that we added to \(\S\).
Since \(\tilde\Q\) is finite this means the algorithm will terminate and return a set \(\S\) of stars.
Let \(S,T\in\S\) be distinct stars with \(\rad S\leq\rad T\) and \(\lm{S\cap T}\geq2^{-1}\lm{\bll S}\).
Then \(S\) and \(T\) cannot be of the same scale, because otherwise one of them would have been removed when the other was added to \(\S\).
That means \(S\) is of strictly smaller scale than \(T\), which means \(T\) has been added to \(\S\) in an earlier step than \(S\),
and thus \(f_S>f_T\) because otherwise \(S\) would have been removed in that step.

It remains to prove \cref{eq_significantmass}.
For \(\lambda\in\mathbb{R}\) and \(S\in\S\) denote
\[
\Q_S^\lambda
=
\bigl\{
Q\in\Q_2^\lambda\cap\tilde\Q:\lm{Q\cap S}
\geq2^{-d-1}
\lm{\bll Q}
\bigr\}
.
\]
Let \(\lambda\in\mathbb{R}\) and \(Q\in\Q_2^\lambda\cap\tilde\Q\).
Then there is a star \(S\) which has been added to \(\S\) when \(Q\) was removed from \(\R\).
This means \(S\) has at least the same scale as \(Q\)
and
\[
\lm{Q\cap S}
\geq2^{-1}
\min\{\lm{\bll Q},\lm{\bll S}\}
\geq
2^{-d-1}
\lm{\bll Q}
\geq
2^{-d-1}
\Lambda^{-d}
\lm Q
\]
and \(f_S\geq f_Q\geq\lambda\).
Since we assumed \(Q\not\in\Q_0^\lambda\cup\Q_1^\lambda\) this requires \(S\not\in\Q_0^\lambda\) and thus \(\lambda_S<\lambda\leq f_S\).
That means for each \(\lambda\in\mathbb{R}\) we have
\begin{equation}
\label{eq_groupbyS}
\Q_2^\lambda
\cap
\tilde\Q
=
\bigcup_{S\in\S:\lambda_S<\lambda\leq f_S}
\Q_S^\lambda
.
\end{equation}
By \cref{lem_smallclosure}\cref{it_startcnearti}, \cref{lem_maxcover} and 
\(
\ms{\Q^\lambda}
\subset
\{Q\in\tilde\Q:f_Q\geq\lambda\}
\subset
\Q^\lambda
\)
we have
\[
\tc{
\bigcup
\Q^\lambda
}
=
\mc{
\bigcup
\Q^\lambda
}
=
\mc{
\bigcup
\ms{\Q^\lambda}
}
\subset
\tc{
\bigcup
\ms{\Q^\lambda}
}
\subset
\tc{
\bigcup
\{Q\in\tilde\Q:f_Q\geq\lambda\}
}
\subset
\tc{
\bigcup
\Q^\lambda
}
\]
and hence
\[
\tb{
\bigcup
\Q^\lambda
}
\subset
\tb{
\bigcup
\{Q\in\tilde\Q:f_Q\geq\lambda\}
}
.
\]
Abbreviating \(C=\tc{
\bigcup
(\Q_0^\lambda\cup\Q_1^\lambda)
}
\)
we can conclude
\begin{equation}
\label{eq_intildeQenough}
\tb{
\bigcup
\Q_2^\lambda
}
\setminus
C
=
\tb{
\bigcup
\Q^\lambda
}
\setminus
C
\subset
\tb{
\bigcup
(\Q^\lambda\cap\tilde\Q)
}
\setminus
C
=
\tb{
\bigcup
(\Q_2^\lambda\cap\tilde\Q)
}
\setminus
C
.
\end{equation}
Let \(S\in\S\) with \(\lambda_S<\lambda\leq f_S\).
That means \(S\not\in\Q_0^\lambda\) and thus either we have \(S\in\Q_2^\lambda\) which implies \(S\in\Q_S^\lambda\),
or we have \(S\in\Q_1^\lambda\).
By \cref{lem_boundaryofunion} in the first case or set inclusion in the second case we obtain
\begin{equation}
\label{eq_individualS}
\tb{
\bigcup
\Q_S^\lambda
}
\setminus
\tc{
\bigcup
\Q_1^\lambda
}
\subset
\Bigl[
\tb{
\bigcup
\Q_S^\lambda
}
\setminus
\tc S
\Bigr]
\cup
\tb S
.
\end{equation}
We can conclude from \cref{eq_groupbyS,eq_intildeQenough,eq_individualS,pro_levelsets_finite_g} that
\begin{align*}
\smb{
\tb{
\bigcup\Q_2^\lambda
}
\setminus
\tc{
\bigcup
(\Q_0^\lambda\cup\Q_1^\lambda)
}
}
&\leq
\sum_{S\in\S:\lambda_S<\lambda\leq f_S}
\smb{
\tb{
\bigcup
\Q_S^\lambda
}
\setminus
\tc{
\bigcup
(\Q_0^\lambda\cup\Q_1^\lambda)
}
}
\\
&\leq
\sum_{S\in\S:\lambda_S<\lambda\leq f_S}
\smb{
\tb{
\bigcup
\Q_S^\lambda
}
\setminus
\tc S
}
+
\sm{
\tb S
}
\\
&\lesssim_{d,\Lambda}
\sum_{S\in\S:\lambda_S<\lambda\leq f_S}
\sm{
\tb S
}
.
\end{align*}
Integrating both sides over \(\lambda\) implies \cref{eq_significantmass}, finishing the proof.
\end{proof}

\begin{lem}
\label{lem_alsoscaleddisjoint}
Let \(r>0\) and let \(\B\) be a set of balls \(B\) with \(r/2<\rad B\leq r\) such that for any two distinct \(B_1,B_2\in\B\) we have
\[
\lm{B_1\cap B_2}\leq2^{-1}\min\{\lm{B_1},\lm{B_2}\}
.
\]
Then for any \(c>0\) and any point \(x\in\mathbb{R}^d\),
there are at most \((8d(1+c))^d\) many balls in \(\{cB:B\in\B\}\) which contain \(x\).
\end{lem}
\begin{proof}
Let \(B(x_1,r_1),B(x_2,r_2)\in\B\) with \(r_1\leq r_2\).
Then by \cref{lem_volumeinnerstar} we have
\begin{align*}
\lm{B(x_1,r_1)}
/2
&\leq
\lm{B(x_1,r_1)\setminus B(x_2,r_2)}
\\
&\leq
\lm{B(x_1,r_1)\setminus B(x_1,r_2-|x_1-x_2|)}
\\
&\leq
\f{
d(r_1-r_2+|x_1-x_2|)
}{
r_1
}
\lm{B(x_1,r_1)}
\end{align*}
which implies
\[
d|x_1-x_2|
\geq
dr_2
-(d-1/2)r_1
\geq
r_2/2
\]
which means \(B(x_1,r_1/(4d))\) and \(B(x_2,r_2/(4d))\) are disjoint.
Then for any \(B\in\B\) with \(x\in cB\) we have
\begin{align*}
(4d)^{-1}B&\subset B\subset B(x,(1+c)r(B))\subset B(x,(1+c)r)
,\\
\lm{(4d)^{-1}B}&\geq(8d(1+c))^{-d}\lm{B(x,(1+c)r)}
.
\end{align*}
We can conclude that there are at most \((8d(1+c))^d\) many such balls.
\end{proof}

\subsection{A sparse mass estimate}
\label{sec_sparsemassabove}

Throughout this \lcnamecref{sec_sparsemassabove} let \(L\geq2,0<\alpha\leq1\), let \(Q_0\subset\mathbb{R}^d\) be bounded and let \(f\in L^1(Q_0)\).
Moreover, let \(\Q\ni Q_0\) be a finite set of sets \(Q\msubset Q_0\) such that for any two \(Q,P\in\Q\) we have
\[
\lm{Q\cap P}\in\{0,\lm Q,\lm P\}
.
\]
Assume that whenever there are \(Q,P\in\Q\) with \(P\msubsetneq Q\) for which there exists no \(R\in\Q\) with \(P\msubsetneq R\msubsetneq Q\), the set \(Q\) has an \(L\)-decomposition \(\P(Q)\) according to \cref{defi_decomposition}\cref{it_decomposition} with \(P\in\P(Q)\subset\Q\).
This means if \(Q\in\Q\) has no \(L\)-decomposition into sets in \(\Q\) then there exists no \(P\in\Q\) with \(P\msubsetneq Q\), and we write \(\P(Q)\cap\Q=\emptyset\) in this case.

The prime example is that \(Q_0\) is a dyadic cube and \(\Q\ni Q_0\) is a subset of the set of dyadic subcubes of \(Q_0\) such that for each \(Q\in\Q\) all dyadic siblings and ancestors of \(Q\) belong to \(\Q\) as well.
This example satisfies the above assumptions for \(L=2^d\).
We will apply the subsequent results later in the setting of \(\Lambda\)-stars, although the shape of the sets in \(\Q\) is mostly irrelevant for the arguments in this \lcnamecref{sec_sparsemassabove}.

For \(\R\subset\Q\) recall the definition of its maximal sets, \(\ms\R\), \cref{defi_maximalsets}.
For any \(\lambda\in\mathbb{R}\) denote 
\[
\ml\Q\lambda
=
\ms{\{
Q\in\Q:
f_Q>\lambda
\tx{ or }
\P(Q)\cap\Q=\emptyset
\}}
.
\]

Our goal is to prove the following \lcnamecref{eq_goallambda}:

\begin{pro}[{\cite[Proposition~3.1]{weigt2020variationdyadic}} for more general sets]
\label{eq_goallambda}
For any \(\mu<f_{Q_0}\) with
\[
\lm{Q_0\cap\{f>\mu\}}\leq\alpha\lm{Q_0}
\]
we have
\[
(f_{Q_0}-\mu)\lm{Q_0}
\leq\f1\alpha
\int_\mu^\infty
\lmb{
\{f>\lambda\}
\cap
\bigcup
\bigl\{
Q\in\ml\Q\lambda:
\lm{\{f\geq\lambda\}\cap Q}
\leq
L\alpha
\lm Q
\bigr\}
}
\intd\lambda
.
\]
\end{pro}

We will later need a slight modification of \cref{eq_goallambda}.

\begin{cor}[{\cite[Corollary~3.3]{weigt2020variationdyadic}} for more general sets]
\label{cla_mostmasssparseabove}
For any \(\mu<f_{Q_0}\) with
\[
\lm{Q_0\cap\{f>\mu\}}\leq(\alpha/2)\lm{Q_0}
\]
we have
\[
(f_{Q_0}-\mu)\lm{Q_0}
\leq\f2\alpha
\int_{f_{Q_0}}^\infty
\lmb{
\{f>\lambda\}
\cap
\bigcup
\bigl\{
Q\in\ml\Q\lambda:
\lm{\{f\geq\lambda\}\cap Q}
\leq
L\alpha
\lm Q
\bigr\}
}
\intd\lambda
.
\]
\end{cor}

\begin{proof}
By assumption
\begin{equation}
\label{eq_lambdalfq}
\int_\mu^{f_{Q_0}}
\lm{\{f>\lambda\}\cap Q_0}
\intd\lambda
\leq
\f\alpha2
(f_{Q_0}-\mu)\lm{Q_0}
.
\end{equation}
By \cref{eq_goallambda} we have
\begin{align*}
(f_{Q_0}-\mu)\lm{Q_0}
&\leq\f1\alpha
\int_\mu^\infty
\lmb{
\{f>\lambda\}
\cap
\bigcup
\bigl\{
Q\in\ml\Q\lambda:
\lm{\{f\geq\lambda\}\cap Q}
\leq
L\alpha
\lm Q
\bigr\}
}
\intd\lambda
\\
&\leq\f1\alpha
\int_\mu^{f_{Q_0}}
\lm{\{f>\lambda\}\cap Q_0}
\intd\lambda
\\
&\qquad+
\f1\alpha
\int_{f_{Q_0}}^\infty
\lmb{
\{f>\lambda\}
\cap
\bigcup
\bigl\{
Q\in\ml\Q\lambda:
\lm{\{f\geq\lambda\}\cap Q}
\leq
L\alpha
\lm Q
\bigr\}
}
\intd\lambda
.
\end{align*}
Now we apply \cref{eq_lambdalfq} to the first term on the right hand side and subtract \((f_{Q_0}-\mu)\lm{Q_0}/2\) from both sides.
This finishes the proof.
\end{proof}

We will prove \cref{eq_goallambda} with the help of the following \lcnamecrefs{pro_massabovewithoutabovehighdensity}:

\begin{lem}
\label{lem_usedlowdensitycubescover}
For every \(\lambda\in\mathbb{R}\) we have
\(
Q_0
\meq
\bigcup\ml\Q\lambda
.
\)
\end{lem}

\begin{proof}
By assumption on \(\Q\) for every \(Q\in\Q\) with \(\P(Q)\cap\Q\neq\emptyset\) we have \(\P(Q)\subset\Q\).
Since \(\Q\) is finite and covers \(Q_0\) this implies that \(\{Q\in\Q:\P(Q)\cap\Q=\emptyset\}\) covers \(Q_0\).
An application of \cref{lem_maxcover} finishes the proof.
\end{proof}

\begin{lem}
\label{lem_highdensity}
Let \(Q\in\Q\) with
\[
\lm{Q\cap\{f\geq f_Q\}}\geq\alpha\lm Q
.
\]
Then for any \(\mu\leq f_Q\) we have
\[
(f_Q-\mu)\lm Q
\leq
\f1\alpha
\int_\mu^{f_Q}
\lm{Q\cap\{f>\lambda\}}
\intd\lambda
.
\]
\end{lem}

\begin{proof}
\[
(f_Q-\mu)\lm Q
=
\int_\mu^{f_Q}
\lm Q
\intd\lambda
\leq
\f1\alpha
\int_\mu^{f_Q}
\lm{Q\cap\{f>\lambda\}}
\intd\lambda
.
\qedhere
\]
\end{proof}

For \(\mu\in\mathbb{R}\) take
\[
\bar\Q_\mu
\subset
\msb{\bigl\{
Q\in\Q:
f_Q\leq\mu
\tx{ or }
\lm{\{f\geq f_Q\}\cap Q}
\geq\alpha
\lm Q
\bigr\}}
\]
such that for every \(Q\) on the right hand side there of the above display there is a \(P\in\bar\Q_\mu\) with \(P\meq Q\),
and for all \(Q,P\in\bar\Q_\mu\) with \(Q\neq P\) we have \(Q\not\meq P\).

\begin{lem}
\label{pro_massabovewithoutabovehighdensity}
For any \(\mu<f_{Q_0}\) we have
\[
(f_{Q_0}-\mu)\lm{Q_0}
\leq
\f1\alpha
\int_\mu^\infty
\lmb{
\{f>\lambda\}
\setminus
\bigcup\{
Q\in\bar\Q_\mu:
f_Q
\leq
\lambda
\}
}
\intd\lambda
.
\]
\end{lem}

\begin{proof}
By Cavalieri's principle, for any measurable \(\Omega\subset\mathbb{R}^n\) and \(g\in L^1(\Omega)\) we have
\[
\int_\Omega
(g-\mu)
=
\int_\mu^\infty
\lm{\Omega\cap\{g>\lambda\}}
\intd\lambda
-
\int_{-\infty}^\mu
\lm{\Omega\cap\{g<\lambda\}}
\intd\lambda
\leq
\int_\mu^\infty
\lm{\Omega\cap\{g>\lambda\}}
\intd\lambda
.
\]
Note, that
\[
\int_Q(f-\mu)=(f_Q-\mu)\lm Q
.
\]
Thus, by the previous display for \(\Omega=Q_0\setminus\bigcup\bar\Q_\mu\) and \(g=f\) and
by applying \cref{lem_highdensity} to each \(Q\in\bar\Q_\mu\) with \(f_Q\geq\mu\) we obtain
\begin{align*}
(f_{Q_0}-\mu)\lm{Q_0}
&=
\int_{Q_0}(f-\mu)
=
\int_{Q_0\setminus\bigcup\bar\Q_\mu}
(f-\mu)
+
\sum_{Q\in\bar\Q_\mu}
\int_Q
(f-\mu)
\\
&\leq
\int_\mu^\infty
\lm{\{f>\lambda\}\setminus\bigcup\bar\Q_\mu}
\intd\lambda
+
\f1\alpha
\sum_{Q\in\bar\Q_\mu}
\int_\mu^{\max\{\mu,f_Q\}}
\lm{\{f>\lambda\}\cap Q}
\intd\lambda
\\
&=
\int_\mu^\infty
\Bigl[
\lmb{
\{f>\lambda\}
\setminus
\bigcup\bar\Q_\mu
}
+
\f1\alpha
\lmb{
\{f>\lambda\}
\cap
\bigcup\{
Q\in\bar\Q_\mu:
f_Q>\lambda
\}
}
\Bigr]
\intd\lambda
\\
&\leq
\f1\alpha
\int_\mu^\infty
\lmb{
\{f>\lambda\}
\setminus
\bigcup\{
Q\in\bar\Q_\mu:
f_Q
\leq
\lambda
\}
}
\intd\lambda
.
\qedhere
\end{align*}
\end{proof}

\begin{lem}
\label{lem_usedlowdensitycubeshighdensitysmall}
Let \(\mu\in\mathbb{R}\) with
\[
\lm{\{f>\mu\}\cap Q_0}\leq\alpha\lm{Q_0}
\]
and \(\lambda>\mu\).
Then for every \(Q\in\ml\Q\lambda\) with
\[
\lm{\{f\geq\lambda\}\cap Q}
>
L\alpha
\lm Q
\]
there is a \(P\in\bar\Q_\mu\) with \(Q\msubset P\) and \(f_P\leq\lambda\).
\end{lem}

\begin{proof}
The assumption implies \(Q\msubsetneq Q_0\).
Thus there is an \(R\in\Q\) with smallest Lebesgue measure such that \(Q\msubsetneq R\).
This means \(Q\in\P(R)\neq\emptyset\).
Since \(Q\in\ml\Q\lambda\) we must have \(R\not\in\ml\Q\lambda\).
This implies \(f_R\leq\lambda\) and hence
\[
\lm{\{f>f_R\}\cap R}
\geq
\lm{\{f\geq\lambda\}\cap Q}
>
L\alpha\lm Q
\geq
\alpha\lm R
.
\]
Thus, by \cref{lem_maxcover} there is a \(P\in\bar\Q_\mu\) with \(Q\msubsetneq R\msubset P\).
Again, we must have \(P\not\in\ml\Q\lambda\) and thus \(f_P\leq\lambda\).
\end{proof}

\begin{proof}[Proof of \cref{eq_goallambda}]
This follows from applying \cref{lem_usedlowdensitycubeshighdensitysmall,lem_usedlowdensitycubescover} to the right hand side in \cref{pro_massabovewithoutabovehighdensity}.
\end{proof}

\begin{rem}
In order to prove \cref{theo_goal} for the Hardy-Littlewood maximal operator \(\Mu\) over balls it is enough to prove a suitable variant of \cref{eq_goallambda} for the set of all balls, see also \cref{rem_balls}\cref{it_massaboveballs}.
Our proof of \cref{eq_goallambda} does not work for the set of all balls because a ball cannot be decomposed into finitely many smaller balls.
In fact it is not even entirely clear how the statement of \cref{eq_goallambda} should be formulated in the case of balls.
Note that in its current form \cref{eq_goallambda} only takes into account the parts of \(f\) that are contained within \(Q_0\), and lie above \(\mu\).
This is not strictly necessary, in principle we may allow the right hand side in \cref{eq_goallambda} to take \(f\) into account also anywhere below \(\mu\) and within \(2B_0\) in order to hopefully enable a proof of a variant for balls.
\end{rem}

\begin{lem}
\label{cor_significantmassabove}
Let \(\P\) be a countable set of sets \(Q\subset\mathbb{R}^d\) which have
a ball \(\bll Q\) with \(\lm{\bll Q}\leq\lm Q\) and \(Q\msubset \Lambda\tc{\bll Q}\)
and an \(L\)-decomposition \(\P(Q)\subset\P\) according to \cref{defi_decomposition}\cref{it_decomposition}.
Let \(Q_0\in\P\) and let \(E\subset Q_0\) be a measurable set with \(\lm{E\cap Q_0}\leq\lm{Q_0}/2\).
Then the sets \(Q\in\P\) with \(Q\msubset Q_0\) and
\[
\f1{2L}
\leq
\f{\lm{E\cap Q}}{\lm Q}
\leq
\f12
\]
cover almost all of \(E\cap Q_0\).
\end{lem}

\begin{proof}
By our assumptions on \(\P\) for almost every \(x\in E\cap Q_0\) there exists a sequence \(Q_0=Q^x_0,Q^x_1,Q^x_2,\ldots\in\P\) such that for each \(n\in\mathbb{N}\) we have \(x\in Q^x_n\) and \(Q^x_{n+1}\in\P(Q^x_n)\).
Since \(\lm{\bll{Q^x_n}}\leq\lm{Q^x_n}\) and \(Q^x_n\msubset \Lambda\tc{\bll{Q^x_n}}\) we can infer from the Lebesgue density theorem that for almost every \(x\in E\cap Q_0\) we have \(\lm{E\cap Q^x_n}/\lm{Q^x_n}\to 1\) for \(n\rightarrow\infty\).
Let \(n_x\) be the smallest integer \(n\) for which \(2L\lm{E\cap Q^x_n}\geq\lm{Q^x_n}\).
If \(n_x=0\) then \(Q^x_{n_x}=Q_0\) which means \(2\lm{E\cap Q^x_{n_x}}\leq\lm{Q^x_{n_x}}\) by assumption.
In case \(n_x\geq1\) we argue
\[
\f{
\lm{E\cap Q^x_{n_x}}
}{
\lm{Q^x_{n_x}}
}
\leq
\f{
\lm{E\cap Q^x_{n_x-1}}
}{
\lm{Q^x_{n_x}}
}
\leq
\f{
\lm{Q^x_{n_x-1}}/(2L)
}{
\lm{Q^x_{n_x}}
}
\leq
\f12
,
\]
finishing the proof.
\end{proof}

\subsection{Organizing mass}
\label{sec_organizingmass}

Recall that for \(t\geq0\) and a set \(Q\subset\mathbb{R}^d\) we define
\[
Q(t)
=
\{x\in Q:\dist(x,\mathbb{R}^d\setminus Q)>t\}
.
\]

\begin{lem}
\label{lem_disjointmass}
For every \(\Lambda\geq 1\) and every \(0<\varepsilon<1\) there exist constants \(C,C_1,C_2>0\) which allow for the following:
Let \(\Q\) and \(\S\) be finite collections of sets \(Q\subset\mathbb{R}^d\) which have a ball \(\bll Q\) with \(\bll Q\subset Q\subset \Lambda\tc{\bll Q}\) and denote by \(\rad Q\) the radius of \(\bll Q\).
Assume that for each \(Q_0\in\S\) there is a collection \(\Q_{Q_0}\subset\Q\) with \(\bigcup\Q_{Q_0}\msubset Q_0\)
and that for each \(Q\in\Q\) we have \(\lnot Q_0\msubsetneq Q\).
Then \(\Q\) has a subset \(\P\) with the following properties:
\begin{enumerate}
\item
\label{it_boundedintersect}
For any \(x\in\mathbb{R}^d\) there are at most \(C\) many sets \(Q\in\P\) with \(x\in Q(\varepsilon\rad Q)\).
\item
\label{it_insmallcubes}
For each \(Q_0\in\S\) and \(Q\in\Q_{Q_0}\)
there exists a set \(P\in \P\) with \(Q\subset C_1\tc{\bll P}\)
and \(P\subset C_2\tc{\bll{Q_0}}\).
\end{enumerate}
\end{lem}

\begin{proof}
Denote by \(\tilde\Q\) the set of all \(Q\in\Q\) for which there is no \(P\in\Q\) with \(Q\subset P(\varepsilon\rad P/2)\).
For each \(n\in\mathbb{Z}\) denote by \(\tilde\Q_n\) the set of all \(Q\in\tilde\Q\) with \(2^{n-1}<\rad Q\leq2^n\).
Take a maximal collection \(\P_n\subset \tilde\Q_n\) such that for any two distinct \(Q,P\in\P_n\) their subsets
\(Q(\varepsilon\rad Q)\) and \(P(\varepsilon\rad P)\) are disjoint.
Set \(\P=\bigcup_{n\in\mathbb{Z}}\P_n\). 

First we prove \cref{it_boundedintersect}.
Let \(x\in\mathbb{R}^d\) and let \(n\in\mathbb{Z}\) be the largest integer for which there is a \(Q\in\P_n\) with \(x\in Q(\varepsilon\rad Q)\).
Let \(k\in\mathbb{Z}\) with \(k\leq n-(3+\log_2(\Lambda/\varepsilon))\) and \(P\in\P_k\).
Then
\[
\diam(P)
\leq
2\Lambda\rad P
\leq
2\Lambda2^k
\leq
\varepsilon2^{n-2}
<
\varepsilon\rad Q/2
.
\]
That means we cannot have \(x\in P\) because otherwise \(P\subset Q(\varepsilon\rad Q/2)\) which
contradicts \(P\in\tilde\Q\).
We can conclude that there are at most \(C=\log_2(\Lambda/\varepsilon)+4\) many integers \(k\in\mathbb{Z}\) for which there is a \(Q\in\P_k\) with \(x\in Q(\varepsilon\rad Q)\).
Since by definition of \(\P_k\) for each \(k\in\mathbb{Z}\) there is at most one such \(Q\in\P_k\)
we can conclude \cref{it_boundedintersect}.

Now we prove \cref{it_insmallcubes}.
Let \(Q_0\in\S\) and \(Q\in\Q_{Q_0}\).
If \(Q\in\P\) take \(P=Q\).
Then \(Q\subset\Lambda\tc{\bll Q}\) and \(\bll Q\msubset Q_0\subset\Lambda\tc{\bll{Q_0}}\) which imply \(Q\subset\Lambda^2\tc{\bll{Q_0}}\).
If \(Q\in\tilde\Q\setminus\P\) then there is a \(P\in\P\)
with
\(\rad Q\leq2\rad P\leq4\rad Q\leq2\diam{Q_0}\leq4\Lambda\rad{Q_0}\)
such that \(Q(\varepsilon\rad Q/2)\) intersects \(P(\varepsilon\rad P/2)\).
That means \(Q\cap P\) contains a small ball, almost all of which also belongs to \(Q_0\).
We can conclude
\(Q\subset5\Lambda\tc{\bll P}\),
\(P\subset5\Lambda\tc{\bll Q}\),
and
\(P\subset5\Lambda^2\tc{\bll{Q_0}}\).
If \(Q\not\in\tilde\Q\) then there is an \(R\in\Q\) with \(R(\varepsilon\rad R/2)\supset Q\),
and since \(\Q\) is finite such \(R\) exists for which there is no \(\tilde R\in Q\) with \(R\subset\tilde R(\varepsilon\rad{\tilde R}/2)\).
That means \(R\in \tilde\Q\) and so as in the previous case there is a \(P\in \P\) with
\(Q\subset R\subset5\Lambda\tc{\bll P}\)
and
\(P\subset5\Lambda\tc{\bll R}\).
Since \(Q\msubset Q_0\) also \(Q_0\) intersects \(R(\varepsilon\rad R/2)\).
Thus, if \(4\Lambda\rad{Q_0}<\varepsilon\rad R\)
then \(\diam(Q_0)<\varepsilon\rad R/2\) and hence \(Q_0\subset R\),
and moreover
\[
\lm{R\setminus Q_0}
\geq
\lm{\bll R}
-
\lm{\Lambda\bll{Q_0}}
\geq
\lm{\bll R}
(1-\varepsilon/4)
>0
\]
which would be a contradiction to our assumption on \(\S\) and \(\Q\).
That means we must have \(\rad R\leq4\Lambda\rad{Q_0}/\varepsilon\) and thus
\[
P\subset5\Lambda\tc{\bll R}\subset(24\Lambda/\varepsilon+1)\Lambda\tc{\bll{Q_0}}
.
\]
\end{proof}

\subsection{Combining the results}
\label{sec_combination}

\begin{lem}
\label{lem_contractalittle}
For any \(t>0\), any \(\Lambda\)-star \(Q\) with \(\lm{Q(t)}>0\) and any measurable set \(E\subset\mathbb{R}^d\) we have
\[
\Bigl|
\f{
\lm{Q(t)\cap E}
}{
\lm{Q(t)}
}
-
\f{
\lm{Q\cap E}
}{
\lm Q
}
\Bigr|
\leq
\f{2dt}{\rad Q}
.
\]
\end{lem}

\begin{proof}
Since
\[
\Bigl|
\f{
\lm{Q(t)\cap E}
}{
\lm{Q(t)}
}
-
\f{
\lm{Q\cap E}
}{
\lm Q
}
\Bigr|
\leq
\Bigl|
\f{
\lm{Q(t)\cap E}
}{
\lm{Q(t)}
}
-
\f{
\lm{Q(t)\cap E}
}{
\lm Q
}
\Bigr|
+
\Bigl|
\f{
\lm{Q(t)\cap E}
}{
\lm Q
}
-
\f{
\lm{Q\cap E}
}{
\lm Q
}
\Bigr|
,
\]
\[
\Bigl|
\f{
\lm{Q(t)\cap E}
}{
\lm{Q(t)}
}
-
\f{
\lm{Q(t)\cap E}
}{
\lm Q
}
\Bigr|
=
\f{
\lm{Q(t)\cap E}
}{
\lm{Q(t)}
\lm Q
}
|
\lm Q
-
\lm{Q(t)}
|
\leq
\f{
\lm{Q\setminus Q(t)}
}{
\lm Q
}
\]
and
\[
\Bigl|
\f{
\lm{Q(t)\cap E}
}{
\lm Q
}
-
\f{
\lm{Q\cap E}
}{
\lm Q
}
\Bigr|
\leq
\f{
\lm{Q\setminus Q(t)}
}{
\lm Q
}
,
\]
we can conclude the proof using \cref{lem_volumeinnerstar}.
\end{proof}

\begin{proof}[Proof of \cref{theo_main}]
In \cref{sec_covering} we proved \cref{eq_splitintolowandhighdensity}.
It remains to bound the first term on the right hand side of \cref{eq_splitintolowandhighdensity}, and we will proceed to do so using the tools developed in \cref{sec_dyadic,sec_sparsemassabove,sec_organizingmass}.

Let \(\S\subset\Q\) be the set from \cref{pro_todyadic}.
For every \(Q_0\in\S\) let \(\Q_{Q_0,0}=\{Q_0\}\) and for \(n\in\mathbb{N}\) inductively define
\[
\Q_{Q_0,n+1}
=
\bigcup\{\P(Q):
Q\in\Q_{Q_0,n}\ \exists P\in\Q\ P\msubsetneq Q
\}
.
\]
Since \(\Q\) is finite there is an \(N\in\mathbb{N}\) for which \(\Q_{Q_0,N+1}=\emptyset\) and we let
\[
\Q_{Q_0}=\Q_{Q_0,0}\cup\ldots\cup\Q_{Q_0,N}
.
\]
Then \(Q_0\) and \(\Q_{Q_0}\) satisfy the assumptions from the first paragraph in \cref{sec_sparsemassabove}.
Recall that \(\lambda_{Q_0}\) from \cref{pro_todyadic} satisfies
\[
\lm{Q_0\cap\{f>\lambda_{Q_0}\}}\leq(4L)^{-1}\lm{Q_0}
\]
and let
\[
\T_{Q_0}^\lambda
=
\Bigl\{
Q\in\ml{\Q_{Q_0}}\lambda:
\lm{\{f\geq\lambda\}\cap Q}
\leq
\f{
\lm Q
}2
\Bigr\}
.
\]
By \cref{cla_mostmasssparseabove} with \(\alpha=(2L)^{-1}\) we have
\begin{equation}
\label{eq_massabove}
(f_{Q_0}-\lambda_{Q_0})\lm{Q_0}
\leq4L
\int_{f_{Q_0}}^\infty
\lmb{
\{f>\lambda\}
\cap
\bigcup
\T_{Q_0}^\lambda
}
\intd\lambda
.
\end{equation}
Since \(\P\) satisfies the assumptions in \cref{cor_significantmassabove} for every \(\lambda\in\mathbb{R}\) the collection
\[
\bigcup_{
Q\in\T_{Q_0}^\lambda
}
\Bigl\{
P\in\P:
P\msubset Q
,\ 
\f1{2L}
\leq
\f{\lm{P\cap\{f\geq\lambda\}}}{\lm P}
\leq
\f12
\Bigr\}
\]
has a finite subset \(\R_{Q_0}^\lambda\) with
\begin{equation}
\label{eq_covermostoffbyfinite}
\lmb{
\bigcup
\R_{Q_0}^\lambda
}
\geq
\f9{10}
\lmb{
\{f\geq\lambda\}\cap\bigcup\T_{Q_0}^\lambda
}
.
\end{equation}

Now we show that for every \(\lambda\in\mathbb{R}\) the premise of \cref{lem_disjointmass} holds for the sets
\[
\tilde\S\seq\{Q_0\in\S:f_{Q_0}<\lambda\}
,\qquad
\tilde\Q_{Q_0}\seq\R_{Q_0}^\lambda
,\qquad
\tilde\Q\seq\bigcup_{Q_0\in\tilde\S}\tilde\Q_{Q_0}
.
\]
So let \(\lambda\in\mathbb{R}\) and \(Q,Q_0\in\S\subset\Q\) with \(f_Q,f_{Q_0}<\lambda\) and \(P\in\R_{Q_0}^\lambda\) with \(Q\msubset P\).
We need to show that \(P\msubset Q\).
Unpacking our definitions there is an \(n\in\{0,1,\ldots\}\) and a star \(R\in\ml{\Q_{Q_0}}\lambda\cap\Q_{Q_0,n}\) with \(P\msubset R\).
This means \(Q\msubset R\) and that \(f_R>\lambda\) or \(\P(R)\cap\Q_{Q_0}=\emptyset\).
If \(Q\meq R\) then \(P\msubset Q\) and we are done.
Thus it suffices to consider the case \(Q\msubsetneq R\).
Then by definition of \(\Q_{Q_0,n+1}\) we have \(\P(R)\subset\Q_{Q_0,n+1}\) which means \(\P(R)\cap\Q_{Q_0}\neq\emptyset\),
and thus \(f_R>\lambda\) must hold.
Since \(R\in\Q_{Q_0,n}\) there exists a sequence \(Q_0,\ldots,Q_n\in\P\) with \(Q_n=R\) such that for each \(0\leq k\leq n-1\) we have \(Q_k\in\Q_{Q_0,k}\) and \(Q_{k+1}\in\P(Q_k)\).
That means for every \(0\leq k\leq n\) we have \(Q\msubsetneq Q_k\).
Since \(\Q\) is \(L\)-complete it follows inductively that \(Q_0,\ldots,Q_n\in\Q\) and thus \(R\in\Q\).
But then \(Q\msubsetneq R\) and \(f_R>\lambda\geq f_Q\) contradict \cref{pro_todyadic}\cref{it_dyadicstarsonlymax}.

That means for every \(\lambda\in\mathbb{R}\) the premise holds and we can apply \cref{lem_disjointmass} with
\[
\tilde\S=\{Q_0\in\S:f_{Q_0}<\lambda\}
,\quad
\tilde\Q_{Q_0}=\R_{Q_0}^\lambda
,\quad
\tilde\Q=\bigcup_{Q_0\in\tilde\S}\tilde\Q_{Q_0}
,\quad
\varepsilon=(8dL)^{-1}
.
\]
We denote the resulting set of \(\Lambda\)-stars by \(\F^\lambda\).
By \cref{lem_disjointmass}\cref{it_insmallcubes} for every \(Q_0\in\S\) and \(\lambda>f_{Q_0}\) we have
\begin{equation}
\label{eq_toglobalmass}
\lmb{
\bigcup\R_{Q_0}^\lambda
}
\leq
\lmb{
\bigcup\{C_1\bll Q:Q\in \F^\lambda,\ Q\subset C_2\bll{Q_0}\}
}
\leq C_1^d
\sum_{Q\in\F^\lambda:Q\subset C_2\bll{Q_0}}
\lm{\bll Q}
.
\end{equation}
By \cref{cla_largeboundaryinball} for every \(Q_0\in\S\) we have
\begin{equation}
\label{eq_starperimetervsvolume}
\sm{\tb{Q_0}}
\lesssim_d\Lambda^{d+1}
\sm{\tb{\Lambda\bll{Q_0}}}
=\Lambda^{2d}
\sm{\tb{\bll{Q_0}}}
\lesssim_{d,\Lambda}
\lm{Q_0}
/
\rad{Q_0}
.
\end{equation}
By \cref{pro_todyadic}\cref{eq_significantmass} and \cref{eq_massabove,eq_covermostoffbyfinite,eq_toglobalmass,eq_starperimetervsvolume} we obtain
\begin{align}
\notag
\int_{-\infty}^\infty
\smb{
\tb{
\bigcup\Q_2^\lambda
}
\setminus
\tc{
\bigcup
(\Q_0^\lambda\cup\Q_1^\lambda)
}
}
\intd\lambda
&\lesssim_{d,\Lambda}
\sum_{Q_0\in\S}
(f_{Q_0}-\lambda_{Q_0})
\sm{\tb{Q_0}}
\\
\notag
&\lesssim_{d,L,\Lambda}
\sum_{Q_0\in\S}
\rad{Q_0}^{-1}
\int_{f_{Q_0}}^\infty
\sum_{Q\in\F^\lambda:Q\subset C_2\bll{Q_0}}
\lm{\bll Q}
\intd\lambda
\\
\label{eq_tomassabove}
&\leq
\int_{-\infty}^\infty
\sum_{Q\in \F^\lambda}
\lm{\bll Q}
\sum_{Q_0\in\S,Q\subset C_2\bll{Q_0}}
\rad{Q_0}^{-1}
\intd\lambda
.
\end{align}
Let \(k\in\mathbb{Z}\) and let \(Q_0,Q_1\in\S\) be distinct with
\[
2^{k-1}<\rad{Q_0}\leq\rad{Q_1}\leq2^k
.
\]
Then by \cref{pro_todyadic}\cref{it_starsalmostdyadic} we have
\[
\lm{\bll{Q_0}\cap\bll{Q_1}}
\leq2^{-1}
\min\{\lm{\bll{Q_0}},\lm{\bll{Q_1}}\}
.
\]
Let \(Q\in \F^\lambda\) and let \(n\in\mathbb{Z}\) with \(2^{n-1}<\rad Q\leq 2^n\).
Then by \cref{lem_alsoscaleddisjoint} for any \(k\in\mathbb{Z}\) the number of \(\Lambda\)-stars \(Q_0\in\S\) with \(2^{k-1}<\rad{Q_0}\leq2^k\) and \(Q\subset C_2\bll{Q_0}\) is bounded by a constant that depends only on \(d,L,\Lambda\).
Moreover, if \(k\leq n-1-\log_2C_2\) then
\[
\diam(C_2\bll{Q_0})
\leq
2C_2\rad{Q_0}
\leq
2^{k+1}C_2
\leq
2^n<2\rad Q
\leq
\diam(Q)
\]
which means \(Q\not\subset C_2\bll{Q_0}\).
We can conclude
\begin{equation}
\label{eq_massbelowsums}
\sum_{
\substack{
Q_0\in\S
,\\
Q\subset C_2\bll{Q_0}
}}
\mkern-20mu
\rad{Q_0}^{-1}
=
\mkern-10mu
\sum_{k>n-1-\log_2C_2}
\mkern-10mu
\sum_{
\substack{
Q_0\in\S,\\
Q\subset C_2\bll{Q_0},\\
2^{k-1}<\rad{Q_0}\leq2^k
}}
\mkern-20mu
\rad{Q_0}^{-1}
\lesssim_{d,\Lambda,L}
\mkern-10mu
\sum_{k>n-1-\log_2C_2}
\mkern-20mu
2^{-k}
<
2^{2-n}C_2
\lesssim_{d,L,\Lambda}
\f1
{\rad Q}
.
\end{equation}
By \cref{lem_starinnerstar} for any \(Q\in\R_{Q_0}^\lambda\) the set \(Q(\varepsilon\rad Q)\) is a \(\Lambda/(1-\varepsilon)\)-star.
Moreover by the definition of \(\R_{Q_0}^\lambda\) we have
\[
(2L)^{-1}\leq\lm{Q\cap\{f\geq\lambda\}}/\lm Q\leq1/2
.
\]
Recalling \(\varepsilon=(8dL)^{-1}\) by \cref{lem_contractalittle} we obtain
\[
\f1{4L}
\leq
\f{
\lm{Q(\varepsilon\rad Q)\cap\{f\geq\lambda\}}
}{
\lm{Q(\varepsilon\rad Q)}
}
\leq
\f34
.
\]
That means by \cref{lem_volumeinnerstar,lem_isoperimetricboundedaway} that
\begin{equation}
\lm{\bll Q}/\rad Q
\lesssim_d
\lm Q^{(d-1)/d}
\lesssim_d
\lm{Q(\varepsilon\rad Q)}^{(d-1)/d}
\label{eq_volumetoboundary}
\lesssim_{d,\Lambda,L}
\sm{Q(\varepsilon\rad Q)\cap\mb{\{f\geq\lambda\}}}
.
\end{equation}
Recall that by \cref{lem_disjointmass}\cref{it_boundedintersect} for every \(\lambda\in\mathbb{R}\) and \(x\in\mathbb{R}^d\) there are at most \(C\) different stars \(Q\in\F^\lambda\) with \(x\in Q(\varepsilon\rad Q)\).
Moreover, for every \(Q\in\F^\lambda\) there is an \(S\in\S\) with \(Q\msubset S\) and thus by \cref{lem_smallclosure}\cref{it_startopeneqmesopen} we have
\[
Q(\varepsilon\rad Q)\subset\mi Q\subset\mi S=\ti S
.
\]
Denoting \(\bigcup\ti\S\seq\bigcup\{\ti S:S\in\S\}\) we obtain
\begin{align}
\notag
\sum_{Q\in\F^\lambda}
\sm{Q(\varepsilon\rad Q)\cap\mb{\{f\geq\lambda\}}}
&=
\int_{\mb{\{f\geq\lambda\}}}
\sum_{Q\in\F^\lambda}
\ind{Q(\varepsilon\rad Q)}(x)
\intd\sm x
\\
\notag
&\leq C
\int_{\bigcup\ti\S\cap\mb{\{f\geq\lambda\}}}
\intd\sm x
\\
\label{eq_massabovedisjoint}
&=C
\smb{\bigcup\ti\S\cap\mb{\{f\geq\lambda\}}}
.
\end{align}
Combining \cref{eq_massbelowsums,eq_volumetoboundary,eq_massabovedisjoint}
we obtain
\begin{equation}
\label{eq_eachlevelset}
\sum_{Q\in \F^\lambda}
\lm{\bll Q}
\sum_{Q_0\in\S,Q\subset C_2\bll{Q_0}}
\rad{Q_0}^{-1}
\lesssim_{d,L,\Lambda}
\smb{\bigcup\ti\S\cap\mb{\{f\geq\lambda\}}}
.
\end{equation}
We integrate \cref{eq_eachlevelset} over \(\lambda\in\mathbb{R}\) and apply \cref{eq_tomassabove} to obtain
\[
\int_{-\infty}^\infty
\smb{
\tb{
\bigcup\Q_2^\lambda
}
\setminus
\tc{
\bigcup
(\Q_0^\lambda\cup\Q_1^\lambda)
}
}
\intd\lambda
\lesssim_{d,L,\Lambda}
\int_{-\infty}^\infty
\smb{\bigcup\ti\S\cap\mb{\{f\geq\lambda\}}}
\intd\lambda
.
\]
Applying the previous inequality to \cref{eq_splitintolowandhighdensity} finishes the proof.
\end{proof}

\section{Local integrability and approximation}
\label{sec_approximation}

\subsection{Perturbations with zero Lebesgue measure}
\label{subsec_openclosed}

In this \lcnamecref{subsec_openclosed} we prove \cref{rem_tiortc}.

\begin{lem}
\label{lem_lindelof}
Every set \(\B\) of open balls has a countable subset \(\C\subset\B\) with
\(
\bigcup\C
=
\bigcup\B
.
\)
\end{lem}

\begin{proof}
Because we may write \(\B\) as the countable union
\[
\B=
\bigcup_{n\in\mathbb{Z}}
\{B\in\B:2^{n-1}<r(B)\leq 2^n\}
,
\]
after rescaling it is enough to prove the \lcnamecref{lem_lindelof} in the case that \(\B\) is a set of balls \(B\) with \(2^{-1}<r(B)\leq 1\).
Then for \(N\in2\mathbb{N}\) and \(0\leq n<N/2\) let
\(\B_n^N\) be a maximal set of balls \(B\in\B\) with
\[
2^{-1}+n/N< r(B)\leq 2^{-1}+(n+1)/N
\]
such that the centers of any two distinct balls in \(\B_n^N\) have distance at least \(1/N\).
Let \(x\in\bigcup\B\).
Then there is a ball \(B(y,r)\in\B\) with \(x\in B(y,r)\) and an \(N\in2\mathbb{N}\) with \(|y-x|\leq r-2/N\). 
Moreover, there is a \(0\leq n<N/2\) with
\[
2^{-1}+n/N< r\leq 2^{-1}+(n+1)/N
\]
which means there exists a \(B(\tilde y,\tilde r)\in\B_n^N\) such that \(|\tilde y-y|\leq1/N\).
We can conclude
\[
|x-\tilde y|
\leq
|x-y|
+
|y-\tilde y|
\leq
r-2/N+1/N
=
r-1/N
<
\tilde r
\]
which means
\(x\in B(\tilde y,\tilde r)\).
Since \(\bigcup_{N\in2\mathbb{N}}\bigcup_{0\leq n<N/2}\B_n^N\) is a countable set of balls this finishes the proof.
\end{proof}

Recall that for a set \(\Q\) of \(\Lambda\)-stars we denote
\(
\tc\Q
=
\{\tc Q:Q\in\Q\}
\)
and
\(
\ti\Q
=
\{\ti Q:Q\in\Q\}
.
\)

\begin{lem}
\label{it_starsettcvsti}
For any set \(\Q\) of \(\Lambda\)-stars the sets \(\bigcup\tc\Q\), \(\bigcup\ti\Q\) and \(\bigcup\Q\) are measurable and
\[
\bigcup\tc\Q
\meq
\bigcup\Q
\meq
\bigcup\ti\Q
.
\]
\end{lem}

\begin{proof}
The set \(\bigcup\ti\Q\) is Lebesgue measurable since it is open.
For any \(n\in\mathbb{Z}\) let
\[
\Q_n=\{Q\in\Q:2^{n-1}<\rad Q\leq 2^n\}
.
\]
Then by \cref{lem_smallclosure}\cref{it_startcnearti} we have
\[
\bigcup\tc\Q
\setminus
\bigcup\ti\Q
\subset
\bigcup_{n\in\mathbb{Z}}
\bigcup\tc{\Q_n}
\setminus
\bigcup\ti{\Q_n}
\subset
\bigcup_{n\in\mathbb{Z}}
\tb{\bigcup\ti{\Q_n}}
.
\]
Let \(n\in\mathbb{Z}\) and let \(B\) be a ball with \(r(B)>2^{n-1}\).
By \cref{lem_smallclosure}\cref{it_startcandtiarestars} the collection \(\ti{\Q_n}\) consists of \(\Lambda\)-stars \(\ti Q\) with \(\rad{\ti Q}>2^{n-1}\)
and thus we can apply \cref{cla_largeboundaryinball} and obtain \(\lm{B\cap\tb{\bigcup\ti{\Q_n}}}=0\).
Covering \(\mathbb{R}^d\) by a countable set of such balls \(B\) and summing over \(n\in\mathbb{Z}\) we thus obtain
\[
\lmb{
\bigcup\tc\Q
\setminus
\bigcup\ti\Q
}
=
0
\]
and hence
\[
\bigcup\tc\Q
\msubset
\bigcup\ti\Q
\subset
\bigcup\Q
\subset
\bigcup\tc\Q
.
\]
That means that the three previous sets have the same Lebesgue measure and in particular are all Lebesgue measurable.
\end{proof}

\begin{lem}
\label{lem_perturbationmeasurable}
Let \(\Q\) and \(\tilde\Q\) be sets as in \cref{rem_tiortc}, i.e.\
let \(\Q\) be a set of \(\Lambda\)-stars and let \(\tilde\Q\) be a set of subsets of \(\mathbb{R}^d\) such that for each \(Q\in\Q\) there exists a \(\tilde Q\in\tilde\Q\), and, conversely, for each \(\tilde Q\in\tilde\Q\) there exists a \(Q\in\Q\), such that \(\tilde Q\subset\tc Q\) and \(\ti Q\msubset\tilde Q\).
Then \(\bigcup\tilde\Q\) is Lebesgue measurable with \(\bigcup\tilde\Q\meq\bigcup\Q\).
\end{lem}

\begin{proof}
For any \(x\in\bigcup\ti\Q\) there is a \(Q_x\in\Q\) and a ball \(B_x\) with \(x\in B_x\subset\ti{Q_x}\msubset\tilde{Q_x}\).
We have \(\bigcup\ti\Q=\bigcup\bigl\{B_x:x\in\bigcup\ti\Q\bigr\}\)
and thus by \cref{lem_lindelof} there is a sequence \(x_1,x_2,\ldots\in\bigcup\ti\Q\) such that \(\bigcup\ti\Q=B_{x_1}\cup B_{x_2}\cup\ldots\)
which means
\[
\lmb{
\bigcup\ti\Q
\setminus
\bigcup\tilde\Q
}
\leq
\sum_{n=1}^\infty
\lm{
B_{x_n}
\setminus
\tilde{Q_{x_n}}
}
=
0
.
\]
That means \(\bigcup\ti\Q\msubset\bigcup\tilde\Q\subset\bigcup\tc\Q\) and we can finish the proof using \cref{it_starsettcvsti}.
\end{proof}

\begin{proof}[Proof of \cref{rem_tiortc}]
As a special case of \cref{lem_perturbationmeasurable} any star \(Q\in\Q\) differs from its corresponding set \(\tilde Q\in\tilde\Q\) by a set of Lebesgue measure zero so that \(f_{\tilde Q}=f_Q\).
By another application of \cref{lem_perturbationmeasurable} we obtain
\[
\bigcup\{Q\in\Q:f_Q>q\}
\meq
\bigcup\{\tilde Q\in\tilde\Q:f_{\tilde Q}>q\}
.
\]
Therefore, the set
\begin{align*}
\{\M_{\tilde\Q}f>\M_\Q f\}
=
\bigcup_{q\in\mathbb{Q}}
\{x\in\Omega:\M_{\tilde\Q}f(x)>q>\M_\Q f(x)\}
\end{align*}
has Lebesgue measure zero, which means that for almost every \(x\in\Omega\) we have \(\M_{\tilde\Q}f(x)\leq\M_\Q f(x)\).
The reverse inequality follows the same way.

It remains to consider \cref{theo_main} where \(\Q\) and \(\tilde\Q\) are assumed finite.
By the above arguments we have
\[
\mb{\bigcup{\tilde\Q}^\lambda}
=
\mb{\bigcup\Q^\lambda}
\subset
\
\tb{\bigcup\Q^\lambda}
\]
and for every pair \(Q\in\Q\), \(\tilde Q\in\tilde\Q\) we have
\(\ti Q\subset\mi Q=\mi{\tilde Q}\).
That means \cref{theo_main} for \(\Q\) implies \cref{theo_main} for \(\tilde\Q\) with the measure theoretic boundary and interior instead of the topological boundary and interior.

It remains to consider the case that for every pair \(Q\in\Q\), \(\tilde Q\in\tilde\Q\) we have \(\ti Q\subset\tilde Q\) which means \(\ti Q\subset\ti{\tilde Q}\).
By \cref{lem_smallclosure}\cref{it_opencompletestars} also \(\ti\Q\) is an \(L\)-complete set of \(\Lambda\)-stars and thus \cref{theo_main} holds also for \(\ti\Q\) instead of \(\Q\).
By \cref{lem_smallclosure}\cref{it_startcnearti} we have
\[
\tc{\tilde Q}
\subset
\tc Q
\subset
\tc{\ti Q}
\subset
\tc{\tilde Q}
\]
and thus
\[
\tc{\bigcup\tilde\Q^\lambda}
=
\bigcup\tc{\tilde\Q^\lambda}
=
\bigcup\tc{\ti{\Q^\lambda}}
=
\tc{\bigcup\ti{\Q^\lambda}}
.
\]
We can conclude
\[
\tb{\bigcup\tilde\Q^\lambda}
\subset
\tc{\bigcup\tilde\Q^\lambda}
\setminus
\bigcup\ti{\tilde\Q^\lambda}
\subset
\tc{\bigcup\ti{\Q^\lambda}}
\setminus
\bigcup\ti{\Q^\lambda}
=
\tb{\bigcup\ti{\Q^\lambda}}
\]
and thus \cref{theo_main} for \(\tilde\Q\) follows from \cref{theo_main} for \(\ti\Q\).
\end{proof}

\subsection{Approximating uncountable sets of stars}
\label{subsec_uncountable}

In this \lcnamecref{subsec_uncountable} we prove the local integrability of the local maximal function,
and use it to deduce \cref{theo_maininfinite} from \cref{theo_main}.
Let \(\Lambda\geq1\), let \(\Omega\subset\mathbb{R}^d\) be an open set and let \(f\in L^1_\loc(\Omega)\).
Recall the local Hardy-Littlewood maximal operator
\begin{align*}
\Mu_\Omega
&=
\M_\Q
&\tx{for}\quad
\Q
&=
\{B:B\tx{ ball},\ \tc B\subset\Omega\}
,\intertext{
and denote
}
\Mj_{\Lambda,\Omega}
&=
\M_\Q
&\tx{for}\quad
\Q
&=
\{\tc Q:Q\tx{ \(\Lambda\)-John domain},\ Q\subset\Omega\}
.
\end{align*}

\begin{pro}
\label{lem_locallylikelocal}
Let \(\Lambda\geq1\), let \(\Omega\subset\mathbb{R}^d\) be an open set and let \(f\in L^1_\loc(\Omega)\) with \(f\geq0\) and \(\var_\Omega f<\infty\).
Then for every ball \(B\) and every \(\varepsilon>0\) with \((1+\varepsilon)\tc B\subset\Omega\) there is a \(K<\infty\) such that
for every \(x\in B\) we have
\[
\Mj_{\Lambda,\Omega} f(x)\lesssim_{d,\Lambda,\varepsilon}\max\{K,\Mu_{(1+\varepsilon)B} f(x)\}
.
\]
\end{pro}

\begin{proof}
Recall \cref{lem_Johnvstar}\cref{it_Johnhasball}, i.e.\ that every \(\Lambda\)-John domain \(Q\) has a ball \(\bll Q\) with \(\bll Q\subset Q\subset\Lambda\tc{\bll Q}\).
Define
\[
\Q
=
\bigl\{
Q\ \Lambda\tx{-John domain}:
Q\subset\Omega,\ \tc Q\cap B\neq\emptyset,\ \Lambda\bll Q\setminus(1+\varepsilon/2)B\neq\emptyset
\bigr\}
\]
and
\(
K
=
\sup_{Q\in\Q}
f_Q
.
\)
Note, that by \cref{lem_smallclosure}\cref{it_startcnearti} the sets \(Q\) and \(\tc Q\) only differ by a set of Lebesgue measure zero and in particular we have \(f_Q=f_{\tc Q}\).
Let \(x\in B\) and let \(Q\) be a \(\Lambda\)-John domain with \(x\in\tc Q\) and \(\Lambda\bll Q\subset(1+\varepsilon/2)B\).
Then for the ball \(C=\f{1+2\varepsilon/3}{1+\varepsilon/2}\Lambda\bll Q\) we have
\[
x\in\tc Q\subset\Lambda\tc{\bll Q}\subset C
\]
and \(\tc C\subset(1+\varepsilon)B\) and thus
\[
\f1{\lm Q}\int_Q f
\leq
\f{(1+2\varepsilon/3)^d}{(1+\varepsilon/2)^d}
\f{\Lambda^d}{\lm C}\int_C f
\leq
\f{(1+2\varepsilon/3)^d}{(1+\varepsilon/2)^d}
\Lambda^d\Mu_{(1+\varepsilon)B}f(x)
\]
which implies
\[
\Mj_{\Lambda,\Omega} f(x)
\leq
\max\{(1+2\varepsilon/3)^d\Lambda^d\Mu_{(1+\varepsilon)B}f(x),K\}
.
\]
It remains to show \(K<\infty\).
To that end take a sequence of John domains \(Q_1,Q_2,\ldots\in\Q\) with \(f_{Q_n}\rightarrow K\) as $n\to\infty$.
That means for every \(n\) we have
\[
\rad{Q_n}\geq\diam{\Lambda\tc{\bll{Q_n}}}/(2\Lambda)\geq\varepsilon r(B)/(4\Lambda)
.
\]
Since there is an \(x_n\in Q_n\cap(1+\varepsilon/4)B\), by definition of a John domain we can conclude that there is a ball \(B_n\subset Q_n\cap(1+\varepsilon/2)B\) with \(\rad{B_n}\geq\varepsilon r(B)/(8\Lambda)\).
By a compactness argument there is a subsequence \(n_1<n_2<\ldots\) such that the balls \((B_{n_k})_k\) converge in \(L^1((1+\varepsilon/2)B)\) to a ball \(B_0\subset(1+\varepsilon/2)B\) with \(r(B_0)\geq\varepsilon r(B)/(8\Lambda)\).
That means for \(C=2^{-1}B_0\) there is a \(k_0\) such that for all \(k\geq k_0\) we have \(C\subset B_{n_k}\subset Q_{n_k}\).
Let \(Q\in\{C,Q_{n_1},Q_{n_2},\ldots\}\).
Since \(f\in L^1_\loc(\Omega)\) with \(\var_\Omega f<\infty\) we know from \cref{rem_medianpoincare} that \(f\in L^1(C)\) and \(f\in L^1(Q_{n_k})\) and thus by Hölder's inequality and \cref{theo_poincare} we obtain for \(k\geq k_0\) that
\begin{align*}
|f_C-f_{Q_{n_k}}|
&\leq
\lm C^{-1}\lVert f-f_{Q_{n_k}}\rVert_{L^1(C)}
\\
&\leq
\lm C^{-(d-1)/d}\lVert f-f_{Q_{n_k}}\rVert_{L^{d/(d-1)}(C)}
\\
&\leq
\lm C^{-(d-1)/d}\lVert f-f_{Q_{n_k}}\rVert_{L^{d/(d-1)}(Q_{n_k})}
\\
&\lesssim_{d,\Lambda}
\lm C^{-(d-1)/d}\var_{Q_{n_k}}f
\\
&\leq
\lm C^{-(d-1)/d}\var_\Omega f
.
\end{align*}
Thus we can conclude
\[
K\lesssim_{d,\Lambda}f_C+r(C)^{-(d-1)}\var_\Omega f<\infty
.
\]
This finishes the proof.
\end{proof}

\begin{cor}
\label{lem_mffinite}
Let \(\Lambda\geq1\), let \(\Omega\subset\mathbb{R}^d\) be an open set and let \(f\in L^1_\loc(\Omega)\) with \(\var_\Omega f<\infty\).
Then \(\Mj_{\Lambda,\Omega} f\in L^{d/(d-1)}_\loc(\Omega)\).
\end{cor}

\begin{proof}
Let \(B\) be a ball with \(\tc B\subset\Omega\).
Then there exists an \(\varepsilon>0\) with \((1+\varepsilon)\tc B\subset\Omega\).
Therefore by \cref{lem_locallylikelocal} we have
\[
\int_B(\Mj_{\Lambda,\Omega} f)^{\f d{d-1}}
\leq
\int_B(\Mj_{\Lambda,\Omega}|f|)^{\f d{d-1}}
\\
\lesssim_{d,\Lambda,\varepsilon}
\lm BK^{\f d{d-1}}
+
\int_B(\Mu_{(1+\varepsilon)B}|f|)^{\f d{d-1}}
.
\]
By \cref{eq_hlmft} we have
\[
\int_B(\Mu_{(1+\varepsilon)B}|f|)^{\f d{d-1}}
\lesssim_d
\int_{(1+\varepsilon)B}|f|^{\f d{d-1}}
\lesssim_d
\lm{(1+\varepsilon)B}|f_{(1+\varepsilon)B}|^{\f d{d-1}}
+
\int_{(1+\varepsilon)B}|f-f_{(1+\varepsilon)B}|^{\f d{d-1}}
.
\]
We conclude the proof observing that \(K<\infty\), \(|f_{(1+\varepsilon)B}|<\infty\) and that by \cref{theo_poincare} we have
\[
\int_{(1+\varepsilon)B}|f-f_{(1+\varepsilon)B}|^{\f d{d-1}}
\lesssim_d
(\var_{(1+\varepsilon)B}f)^{\f d{d-1}}
<
\infty
.
\qedhere
\]
\end{proof}

%proof of lem_mffinite can be generalized to works for lightly more general sets than John domains. Maybe still holds for even more general sets but would need different proof. And anyways here we are only interested in star sets

\begin{lem}[{\cite[Theorem~5.2]{MR3409135}}]
\label{lem_l1approx}
Let \(\Omega\subset\mathbb{R}^d\) be an open set and assume that \(f_1,f_2,\ldots\in L^1_\loc(\Omega)\) are functions with \(\var_\Omega f_n<\infty\) which converge in \(L^1_\loc(\Omega)\) to a function \(f\) as $n\to\infty$.
Then
\[
\var_\Omega f
\leq
\liminf_{n\rightarrow\infty}
\var_\Omega f_n
.
\]
\end{lem}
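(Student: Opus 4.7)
The plan is to use the dual characterization of the total variation given in the preliminaries: for $g \in L^1_\loc(\Omega)$,
\[
\var_\Omega g = \sup\Bigl\{\int_\Omega g\div\varphi : \varphi \in C^1_{\tx c}(\Omega;\mathbb{R}^d),\ |\varphi|\leq 1\Bigr\},
\]
and show that the right-hand side is lower semicontinuous under $L^1_\loc$ convergence. Once this is established the lemma follows by taking the supremum over admissible $\varphi$.

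First, I would fix an arbitrary test vector field $\varphi \in C^1_{\tx c}(\Omega;\mathbb{R}^d)$ with $|\varphi|\leq 1$, and let $K = \supp\varphi$, which is a compact subset of $\Omega$. Since $\div\varphi \in C_{\tx c}(\Omega)$ is bounded and supported in $K$, and since $f_n \to f$ in $L^1(K)$ by hypothesis, I would estimate
\[
\Bigl|\int_\Omega(f_n - f)\div\varphi\Bigr| \leq \|\div\varphi\|_\infty \int_K |f_n - f|\intd x \xrightarrow{n\to\infty} 0.
\]
Therefore $\int_\Omega f_n\div\varphi \to \int_\Omega f\div\varphi$.

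Next, for every $n$ the definition of the variation gives $\int_\Omega f_n\div\varphi \leq \var_\Omega f_n$. Passing to the $\liminf$ on the right and using the convergence from the previous step, I would deduce
\[
\int_\Omega f\div\varphi = \lim_{n\to\infty}\int_\Omega f_n\div\varphi \leq \liminf_{n\to\infty}\var_\Omega f_n.
\]
Finally, taking the supremum over all admissible test fields $\varphi$ yields $\var_\Omega f \leq \liminf_{n\to\infty}\var_\Omega f_n$, which is the claim.

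There is no real obstacle here; the only thing to be mindful of is that $\varphi$ having compact support inside $\Omega$ is precisely what makes $L^1_\loc$ convergence enough to pass to the limit in the pairing $\int f_n\div\varphi$, so the dual formulation is essential. No prior uniform bound on $\var_\Omega f_n$ is assumed, but the argument is vacuous (the inequality $\var_\Omega f \leq \infty$ is automatic) in case $\liminf_n \var_\Omega f_n = \infty$, so no separate case analysis is required.
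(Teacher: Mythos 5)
Your argument is correct and is essentially the same as the paper's: the paper simply cites Evans--Gariepy, Theorem~5.2 (noting the a priori BV assumption there is unnecessary), and your dual-pairing argument is exactly the standard proof of that lower semicontinuity result, using that $\int_\Omega f\div\varphi$ with $\varphi\in C^1_{\mathrm c}(\Omega;\mathbb{R}^d)$, $|\varphi|\le1$, passes to the limit under $L^1_{\mathrm{loc}}$ convergence. The only point worth making explicit is that the supremum over such $\varphi$ coincides with $\var_\Omega f=\mu(\Omega)$ as defined in the preliminaries (and equals $+\infty$ when $f$ is not of locally bounded variation), which is standard.
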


In \cite[Theorem~5.2]{MR3409135} they assume \(f_n\in\BV(\Omega)\), which is not necessary.

\begin{cor}
\label{cor_lindelofopen}
Every set \(\Q\) of open sets has a countable subset \(\P\subset\Q\) with
\(
\bigcup\P
=
\bigcup\Q
.
\)
\end{cor}

\begin{proof}
Any open set \(U\) is the union of all balls \(B\subset U\).
That means for
\[
\B=\{\tx{ball }B:\exists U\in\Q\ B\subset U\}
\]
we have \(\bigcup\Q=\bigcup\B\) and by \cref{lem_lindelof} there is countable subset \(\C\subset\B\) with \(\bigcup\C=\bigcup\B\).
For each \(B\in\C\) choose one open \(Q\in\Q\) with \(B\subset Q\) and let \(\P\) be the resulting set of open sets \(Q\).
Then \(\P\) is countable and
\[
\bigcup\Q=\bigcup\B=\bigcup\C\subset\bigcup\P\subset\bigcup\Q
.
\]
\end{proof}

\begin{proof}[Proof of \cref{theo_maininfinite}]
By \cref{lem_mffinite} we have \(\M_\Q f\in L^{d/(d-1)}_\loc(\Omega)\) and so it remains to prove
\[
\var_\Omega
\M_\Q f
\lesssim_{d,L,\Lambda}
\var_\Omega f
.
\]
We first consider the case that all stars in \(\Q\) are open.
For every \(q\in\mathbb{Q}\), let
\[
\Q^q=\{Q\in\Q:f_Q>q\}
.
\]
By \cref{cor_lindelofopen} it has a countable subset \(\R_q\) with \(\bigcup\R_q=\bigcup\Q^q\).
Recall that any \(Q\in\Q\) has an \(L\)-decomposition \(\P(Q)\).
For \(Q\in\Q\) inductively define
\begin{align*}
\Q_{Q,0}
&=\{Q\}
,&
\Q_{Q,n+1}
&=
\bigcup\{\P(Q)\cap\Q:Q\in\Q_{Q,n}\}
,&
\Q_Q
&=
\Q_{Q,0}\cup\Q_{Q,1}\cup\ldots
.
\end{align*}
With this definition let
\[
\S
=
\bigcup_{q\in\mathbb{Q}}\bigcup_{Q\in\R_q}
\Q_Q
.
\]
Then \(\S\) is countable and \(L\)-complete because \(\Q\) is \(L\)-complete.
Let \(x\in\Omega\).
Then for every \(\lambda\in\mathbb{R}\) with \(\M_\Q f(x)>\lambda\)
there exists a \(q\in\mathbb{Q}\) with \(\M_\Q f(x)>q\geq\lambda\)
and we have \(f(x)>q\geq\lambda\)
or
\[
x\in\bigcup\Q^q=\bigcup\R_q\subset\bigcup\S^q
.
\]
We can conclude that
\[
\M_\Q f(x)
=
\max
\biggl\{
f(x)
,
\sup_{Q\in\S,\ x\in Q}
\f1{\lm Q}\int_Q f(y)\intd y
\biggr\}
.
\]
Take a sequence \(\S_1\subset\S_2\subset\ldots\) of finite subsets of \(\S\) which are \(L\)-complete
and define
\[
\M_n f(x)
=
\max
\biggl\{
f(x)
,
\max_{Q\in\S_n,\ x\in Q}
\f1{\lm Q}\int_Q f(y)\intd y
\biggr\}
.
\]
Then for every \(x\in\Omega\) we have that
\[
f(x)\leq\M_n f(x)\leq\M_\Q f(x)
\]
and \(\M_n f(x)\) monotonously tends to \(\M_\Q f(x)\) from below.
Let \(B\) be a ball with \(\tc B\subset\Omega\).
Since \(f\in L^1_\loc(\Omega)\) we have \(\int_B|f|<\infty\)
and by \cref{lem_mffinite} we have \(\int_B|\M_\Q f|<\infty\).
So we can conclude by monotone convergence that
\[
\int_B|\M_n f(x)-\M_\Q f(x)|\intd x
\rightarrow0
\]
for \(n\rightarrow\infty\).
It follows from \cref{lem_l1approx} that
\begin{equation}
\label{eq_approx}
\var_\Omega\M_\Q f
\leq
\liminf_{n\rightarrow\infty}
\var_\Omega\M_n f
,
\end{equation}
and it suffices to bound \(\var_\Omega\M_n f\) uniformly.
Since \(\S_n\) is finite we have
\[
\{\M_n f\geq \lambda\}
=
\{f\geq\lambda\}
\cup
\bigcup\{Q\in\S_n:f_Q\geq\lambda\}
\]
and thus by \cref{lem_boundaryofunion} we have
\[
\sm{
\mb{
\{\M_n f\geq \lambda\}
}
\cap\Omega
}
\leq
\smb{
\mb{
\bigcup\{Q\in\S_n:f_Q\geq\lambda\}
}
\setminus
\mc{
\{f\geq\lambda\}
}
}
+
\sm{
\mb{
\{f\geq\lambda\}
\cap\Omega
}
}
.
\]
Using \cref{lem_coareabv} we can conclude from \cref{theo_main} that
\begin{align*}
\var_\Omega\M_n f
&\leq
\int_{-\infty}^\infty
\biggl[
\smb{
\mb{
\bigcup\{Q\in\S_n:f_Q\geq\lambda\}
}
\setminus
\mc{
\{f\geq\lambda\}
}
}
+
\sm{
\mb{
\{f\geq\lambda\}
}
\cap\Omega
}
\biggr]
\intd \lambda
\\
&\lesssim_{d,L,\Lambda}
\int_{-\infty}^\infty
\sm{
\mb{
\{f\geq\lambda\}
}
\cap\Omega
}
\intd \lambda
\\
&=
\var_\Omega f
.
\end{align*}
By \cref{eq_approx} this finishes the proof in the case that all stars in \(\Q\) are open.

Now let \(\Q\) be an \(L\)-complete set \(\Q\) of general \(\Lambda\)-stars.
Then by \cref{lem_smallclosure}\cref{it_opencompletestars} the set \(\ti\Q\) is an \(L\)-complete set of open \(\Lambda\)-stars and thus
\[
\var_\Omega\M_{\ti\Q}\lesssim_{d,L,\Lambda}\var_\Omega f
\]
holds by what we have just shown.
Moreover, by \cref{rem_tiortc} the maximal functions \(\M_\Q f\) and \(\M_{\ti\Q}f\) only differ on a set of Lebesgue measure zero.
That means \(\M_\Q f\) and \(\M_{\ti\Q}f\) have the same variation and so we can conclude the result also in the general case.
\end{proof}

\bibliographystyle{alpha}
\bibliography{bib}

\end{document}